\newtheorem{theorem}{Theorem}
\newtheorem{lemma}[theorem]{Lemma}
\newtheorem{definition}{Definition}
\newtheorem{proposition}[theorem]{Proposition}
\newtheorem{remark}{Remark}
\newcommand{\R}{\mathbb{R}}
\newtheorem{Rule}{Rule}
\begin{document}
\title{Subgradient method with feasible inexact projections for constrained  convex optimization problems}
\author{
A. A. Aguiar \thanks{Instituto de Matem\'atica e Estat\'istica, Universidade Federal de Goi\'as,  CEP 74001-970 - Goi\^ania, GO, Brazil, E-mails: {\tt  ademiraguia@gmail.com},  {\tt  orizon@ufg.br},  {\tt  lfprudente@ufg.br}. The authors was supported in part by  CNPq grants 305158/2014-7 and 302473/2017-3,  FAPEG/PRONEM- 201710267000532 and CAPES.}
\and
O.  P. Ferreira  \footnotemark[1]
\and
L. F. Prudente \footnotemark[1]
}
\maketitle

\maketitle
\begin{abstract}
	In this paper, we propose a new  inexact version  of the projected subgradient method to solve nondifferentiable  constrained  convex optimization problems. The   method combine  $\epsilon$-subgradient  method with a procedure to obtain a feasible inexact projection onto the constraint set.   Asymptotic convergence results   and  iteration-complexity bounds  for the sequence generated by the  method employing the well known  exogenous stepsizes, Polyak's stepsizes, and dynamic stepsizes are  stablished. 
\end{abstract}

\noindent
{\bf Keywords:} Subgradient method,  feasible inexact projection,  constrained convex optimization.

\medskip
\noindent
{\bf AMS subject classification:}  49J52, 49M15, 65H10, 90C30.

\section{Introduction}
The Subgradient method is one of the most interesting  iterative method for solving nondifferentiable convex optimization  problems,  which has its origin and development in the 60's,  see  \cite{Ermolev1966, Shor1985}. Since then, the subgradient method has attracted the attention of the scientific community working on optimization. One of the factors that explains this interest is its simplicity and ease of implementation. In particular, allowing a low cost of storage and ready exploitation of separability and sparsity.  For these reasons, several variants of this method have emerged and properties of it have been discovered throughout the years, resulting in a wide literature on the subject; see, for exemple \cite{AlberIusemSolodov1998, Yunier2013, Bertsekas1999, GoffinKiwiel1999, KiwielBook1985, NedicBertsekas2010} and the references therein.

The aim of this paper is to  present   an inexact version  of the projected subgradient method, which   consists in using an inexact projection instead of the exact one,  for 
minimizing a convex function $f: \mathbb{R}^n \to \mathbb{R}$ onto a   closed and convex subset  $C$ of $\mathbb{R}^n$. The proposed method,  that we call {\it Subgradient-InexP method},  generates a sequence $\{x_k\}$ where each iteration   consists of two stages.  The first stage   performs  a step from the current iterate  $x_k$ in the opposite direction of a $\epsilon$-subgradient of $f$ at $ x_k $ and the second  inexactly projects  the resulting vector onto the feasible set $C$.   From the theoretical point of view,  considering methods that use inexact projections are particularly interesting for  the following reasons.  Even when the projection onto  a convex set  is an easy problem, iterative  methods provide only  approximated solutions with small errors, due to round-off errors in floating-point arithmetics.   Therefore,  the study of inexact methods gives  theoretical support for real computational implementations of exact  schemes.  On the other hand, in general,  one drawback of  methods that use exact projections is having to solve a quadratic problem at each stage,  which may  substantially increasing the cost per iteration  if the number of unknowns is large.   In fact, it may not be justified to compute exact projection when the current iterate $x_k$  is far from the solution of the problem in consideration. Moreover,  a procedure for  computing  a feasible inexact projection may present a low computation cost per iteration in comparison with one that computes the exact projection.  Thus,   it seems reasonable to  consider versions of projected subgradient method that compute the projection only approximately.  In order to present  formally  and analyze the Subgradient-InexP method, we use the concept of feasible inexact projection  with relative error, which was  appeared  in \cite{VillaSalzBaldassarre2013} (see also\cite{OrizonFabianaGilson2018}).  It is worth noting that the concept of feasible inexact projection  also accepts an exact projection when it is easy to obtain.  For instance, the exact projections onto a box  or a  second order cone is very easy to obtain;  see, respectively, \cite[p. 520]{NocedalWright2006} and \cite[Proposition 3.3]{FukushimaTseng2002}. A feasible inexact projection onto a polyhedral closed convex set can be obtained using quadratic programming methods that generate feasible iterates, such as feasible active set methods and interior point methods; see, for example, \cite{NicholasPhilippe2002, NocedalWright2006, Robert1996}. It is worth mentioning that, if the exact projection is used, then Subgradient-InexP method becomes the projected subgradient method considered in \cite{AlberIusemSolodov1998}. Several methods similar to the projected subgradient method have been studied in different papers, see \cite{GoffinKiwiel1999, Mainge2008}. However, as far as we know, none of them use the concept of feasible inexact projection.

The main tool used in our analysis of Subgradient-InexP method is a version of the inequality obtained in \cite[Lemma~1.1]{CorreaLemarecha1993}; see also a variant of it in \cite[Lemma~2.1]{nedic_bertsekas2001}.  By using this inequality, we establish asymptotic convergence results and iteration-complexity bounds for the sequence generated by our method employing the well known exogenous stepsizes, Polyak's stepsizes, and dynamic stepsizes.  We point out  that these stepsizes have been discussed extensively in the related literature, including \cite{AlberIusemSolodov1998, GoffinKiwiel1999, nedic_bertsekas2001rate, nedic_bertsekas2001, NedicBertsekas2010, xmwang2018}, where many of our results were inspired.  Let us describe the results   in the present and their  relationship with the literature on the subject.  With respect to the exogenous stepsize we establish convergence results without any compactness assumption, existence of a solution, and  the iteration-complexity bound, which  are similar to the well known bound presented in \cite{AlberIusemSolodov1998, nedic_bertsekas2001}.   In particular, for $C = \mathbb{R}^n$, the convergence results merge into the ones presented in \cite{CorreaLemarecha1993} and the iteration-complexity bound  into \cite[Theorem~3.2.2]{Nesterov2004}.  The asymptotic convergence result and the iteration-complexity bound obtained using Polyak's stepsizes are similar to the corespondent ones in  \cite{ nedic_bertsekas2001, Nesterov2014, Polyak1969} and  \cite{Nesterov2014}, respectively.  Regarding  to the dynamic stepsize, we establish global convergence in objective values as address, for example, in  \cite{GoffinKiwiel1999, nedic_bertsekas2001}. In \cite[Proposition 2.15]{nedic_bertsekas2001rate}, the authors presented the rate of convergence for another variant of subgradient method, known as incremental subgradient algorithms. This study allowed us to estimate an iteraction-complexity bound for the dynamic stepsize.

The organization of the paper is as follows. In Section \ref{sec:int.1}, we present some notation and basic results used in our presentation. In Section \ref{Sec:SubInexProj} we describe the Subgradient-InexP method with different choices for the   stepsize. The main results of the present paper, including the converge theorems and iteration-complexity, are presented in Section \ref{Sec:aca}.   Some numerical experiments are provided in Section \ref{Sec:NumExp}.   We conclude the paper with some remarks in Section \ref{Sec:Conclusions}.
\section{Notation and definitions} \label{sec:int.1}

In this section, we present  some notations, definitions, and results used throughout the paper.  We  are interested in 
\begin{equation} \label{eq:OptP}
	\min \{ f(x) :~  x\in C\},
\end{equation}
where $C$ is a closed and convex subset of $\mathbb{R}^n$, $f:\mathbb{R}^n \to \mathbb{R}$ is a convex function. We denote by 
\begin{equation} \label{eq:ValueOpt}
f^*:= \inf_{x\in C} f(x),
\end{equation}
its infimal value (possibly $-\infty$) and by $\Omega^*$ its solution set (possibly  $\Omega^*= \varnothing$). The next   concept will be useful in the analysis of the sequence generated by the subgradient method to solve \eqref{eq:OptP}.
\begin{definition} \label{def:QuasiFejer}
	A sequence $\{y_k\}\subset \mathbb{R}^n$ is said to be  quasi-Fej\'er convergent to a nonempty set $W\subset \mathbb{R}^n$ if, for every $w\in W$, there exists a sequence $\{\delta_k\}\subset\mathbb{R}$ such that $\delta_k\geq 0$, $\sum_{k=1}^{\infty}\delta_k<+\infty$, and $$\|y_{k+1}-w\|^2\leq \|y_k-w\|^2+\delta_k, \qquad \forall~k=0, 1, \ldots.$$
When, $\delta_k= 0$, for all $k=0, 1, \ldots.$,  $\{y_k\}$  is called  Fej\'er convergent  to a set $W$. 
\end{definition}
The main property of the  quasi-Fej\'er convergent sequence is stated  in the next result, and its proof can be found in \cite{burachik1995full}.
\begin{theorem}\label{teo.qf}
	Let $\{y_k\}$ be a sequence in  $\mathbb{R}^n$. If $\{y_k\}$ is quasi-Fej\'er convergent to a nomempty set $W\subset  \mathbb{R}^n$, then $\{y_k\}$ is bounded. If furthermore, a cluster point $y$ of $\{y_k\}$ belongs to $W$, then $\lim_{k\rightarrow\infty}y_k=y$.
\end{theorem}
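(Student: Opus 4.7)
The plan is to exploit the quasi-Fejér inequality in two ways: first telescoping it to get a uniform bound, then, in the second part, using it to show that the squared distance to the candidate limit is itself a convergent sequence.

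For boundedness, I would fix an arbitrary $w \in W$ and iterate the defining inequality. Applying $\|y_{k+1}-w\|^2 \le \|y_k-w\|^2 + \delta_k$ inductively gives
\[
\|y_{k+1}-w\|^2 \le \|y_0-w\|^2 + \sum_{j=0}^{k}\delta_j \le \|y_0-w\|^2 + \sum_{j=0}^{\infty}\delta_j,
\]
and since the right-hand side is finite by hypothesis, $\{y_k\}$ lies in a fixed closed ball centered at $w$. No cluster point assumption is needed here.

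For the second part, specialize $w = y$ where $y \in W$ is the given cluster point, and set $a_k := \|y_k - y\|^2$, so that $a_{k+1} \le a_k + \delta_k$. The main trick, and the step I expect to be the crux of the argument, is to introduce the tail-corrected sequence $b_k := a_k + \sum_{j=k}^{\infty}\delta_j$. A direct computation shows $b_{k+1} \le b_k$, so $\{b_k\}$ is nonincreasing and nonnegative, hence convergent. Since $\sum_{j=k}^{\infty}\delta_j \to 0$, the sequence $\{a_k\}$ also converges, to the same limit $\ell \ge 0$.

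To finish, I would use that $y$ is a cluster point of $\{y_k\}$: there exists a subsequence $y_{k_i} \to y$, so $a_{k_i} = \|y_{k_i}-y\|^2 \to 0$. Combined with the convergence of the whole sequence $\{a_k\}$, this forces $\ell = 0$, i.e.\ $\|y_k-y\| \to 0$, which is the desired conclusion $\lim_{k\to\infty}y_k = y$. The only delicate point is the passage from "a subsequence converges" to "the whole sequence converges"; the auxiliary sequence $b_k$ is precisely what makes this passage work in the quasi-Fejér (as opposed to purely Fejér) setting, where monotonicity of $\{a_k\}$ itself fails.
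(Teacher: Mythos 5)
Your proof is correct and is essentially the standard argument: the paper itself does not prove this theorem but defers to the cited reference \cite{burachik1995full}, whose proof proceeds exactly as you describe (telescoping for boundedness, then the tail-corrected sequence $b_k = a_k + \sum_{j\ge k}\delta_j$ to get convergence of $\|y_k-y\|^2$, forced to zero along the subsequence converging to the cluster point). No gaps.
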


To describe the method  for solving the problem~\eqref{eq:OptP}  we need  to define, for each $\epsilon \geq 0$,  the $\epsilon$-subdifferential  $\partial_{\epsilon} f(x)$ of a convex function $f$  at $x\in {\mathbb R}^n$,
\begin{equation} \label{eq:e-subdif}
	\partial_{\epsilon} f(x):=\{ s\in {\mathbb R}^n:~f(y)\geq f(x)+\langle s, y-x\rangle -\epsilon, ~\forall y\in {\mathbb R}^n\}.
\end{equation}
We end this section by presenting  important properties  of the set $\epsilon$-subdifferential of a convex function, which   proofs  follow by combining  \cite[Proposition 4.3.1(a)]{Bertsekas2003} and  \cite[Proposition 4.1.1, Proposition 4.1.2]{UrrutyLemarechal1993_II}.
\begin{proposition} \label{pr:CompE-subdif}
	Let $f:\mathbb{R}^n \to \mathbb{R}$ be a convex function and $\epsilon \geq 0$. The set $\partial_{\epsilon} f(x)$ is nonempty, convex,  and compact.  Moreover, if $B\subset  \mathbb{R}^n$ is a bounded set, then there exists a  real number $L>0$ such that $\|s\|<L$, for all $s\in \cup_{x\in B} \partial_{\epsilon} f(x)$. In addition, if   $\{\epsilon_k\}$ is a bounded  sequence of nonnegative real numbers, the sequence $\{x_k\}$ converges to $x \in \mathbb{R}^n$,  and $s_k \in \partial_{\epsilon_k} f(x_k)$  for all $k$, then the sequence $\{s_k\}$ is bounded.
\end{proposition}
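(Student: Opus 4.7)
The plan is to prove the three assertions in order, relying on elementary convex analysis tools that are exactly what the cited Propositions in \cite{Bertsekas2003} and \cite{UrrutyLemarechal1993_II} encapsulate. The crucial ingredient throughout is that a real-valued convex function on $\mathbb{R}^n$ is automatically continuous and locally Lipschitz, since the paper assumes $f$ is defined and finite on all of $\mathbb{R}^n$.

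First I would dispatch nonemptiness, convexity, and closedness. Nonemptiness follows because the ordinary subdifferential $\partial f(x)$ is nonempty for convex $f$ finite everywhere, and since $\partial f(x) \subset \partial_\epsilon f(x)$ for every $\epsilon \geq 0$. Convexity is immediate from the definition~\eqref{eq:e-subdif}: a convex combination of $\epsilon$-subgradients satisfies the same affine minorization inequality. Closedness follows because $\partial_\epsilon f(x)$ is the intersection over $y \in \mathbb{R}^n$ of the closed half-spaces $\{s : \langle s, y - x\rangle \leq f(y) - f(x) + \epsilon\}$. Compactness will then come once boundedness is established, which I would do jointly with the second assertion.

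For the second (and main) assertion I would argue as follows. Let $B$ be bounded and fix any $r > 0$; then $B_r := \{y : \mathrm{dist}(y,B) \leq r\}$ is bounded and $f$ is Lipschitz on the (bounded, closed) set $\overline{B_r}$ with some constant $M > 0$, by local Lipschitz continuity of finite-valued convex functions. For arbitrary $x \in B$ and $s \in \partial_\epsilon f(x)$ with $s \neq 0$, take $y := x + r\, s/\|s\| \in B_r$. Inserting $y$ in~\eqref{eq:e-subdif} yields
\begin{equation*}
r\|s\| = \langle s, y - x\rangle \leq f(y) - f(x) + \epsilon \leq M r + \epsilon,
\end{equation*}
so $\|s\| \leq M + \epsilon/r$. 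Setting $L := M + \epsilon/r + 1$ gives the uniform bound, and in particular compactness of $\partial_\epsilon f(x)$ (the case $B = \{x\}$). The main subtlety here is justifying local Lipschitz continuity of $f$ on bounded sets; this is a classical fact (see the cited references) and is the only nonroutine input.

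Finally, the third assertion is an easy consequence of the second. Since $\{x_k\}$ converges, it is contained in some bounded set $B$. Since $\{\epsilon_k\}$ is bounded, there is $\bar\epsilon \geq 0$ with $\epsilon_k \leq \bar\epsilon$ for all $k$, and from~\eqref{eq:e-subdif} one has $\partial_{\epsilon_k} f(x_k) \subset \partial_{\bar\epsilon} f(x_k)$. Applying the second assertion to $B$ and $\bar\epsilon$ yields an $L > 0$ with $\|s_k\| < L$ for all $k$, concluding the proof.
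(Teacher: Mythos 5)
Your argument is correct. Note, however, that the paper does not actually prove this proposition: it simply defers to \cite[Proposition 4.3.1(a)]{Bertsekas2003} and \cite[Propositions 4.1.1 and 4.1.2]{UrrutyLemarechal1993_II}, so there is no in-paper proof to match step by step. What you have written is a self-contained reconstruction of exactly the facts those references encapsulate: convexity and closedness of $\partial_{\epsilon}f(x)$ read off directly from \eqref{eq:e-subdif} as an intersection of half-spaces, nonemptiness via $\partial f(x)\subset\partial_{\epsilon}f(x)$, and the uniform bound obtained by testing \eqref{eq:e-subdif} at $y=x+r\,s/\|s\|$ together with the local Lipschitz property of a finite convex function on $\mathbb{R}^n$ (which, as you say, is the only genuinely nontrivial input). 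The monotonicity $\partial_{\epsilon_k}f(x_k)\subset\partial_{\bar\epsilon}f(x_k)$ cleanly reduces the third assertion to the second. The trade-off is the usual one: the paper's citation is shorter, while your version makes the dependence on the Lipschitz constant of $f$ over a neighborhood of $B$ explicit, which is arguably more informative for a reader checking where the bound $L$ comes from.
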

\section{Subgradient-InexP method} \label{Sec:SubInexProj}
Next, we present the subgradient method with a feasible  inexact  projections, which will be called  {\it Subgradient-InexP method}.  We begin  by presenting the concept of relative feasible inexact projection, which is a variation of those presented in   \cite{OrizonFabianaGilson2018, VillaSalzBaldassarre2013}.
\begin{definition} \label{def:InexactProj}
Let $C\subset {\mathbb R}^n$ be a closed convex set and  $\varphi_{\gamma, \theta, \lambda}: {\mathbb R}^n\times {\mathbb R}^n\times {\mathbb R}^n \to {\mathbb R}_{+}$ be a relative error tolerance function such that
	\begin{equation} \label{eq:fphi} 
		\varphi_{\gamma, \theta, \lambda}(u, v, w)\leq \gamma \|v-u\|^2 + \theta \|w-v\|^2 +   \lambda \|w-u\|^2, \qquad \forall~ u, v, w \in \mathbb{R}^n,
	\end{equation}
	where $ \gamma, \theta, \lambda \geq 0$ are given forcing parameters. 
 The {\it feasible inexact projection mapping} relative to $u \in C$ with relative error tolerance function  $\varphi_{\gamma, \theta, \lambda}$, denoted by ${\cal P}_C(\varphi_{\gamma, \theta, \lambda},u, \cdot): {\mathbb R}^n \rightrightarrows C$ is the set-valued mapping defined as follows
	\begin{equation} \label{eq:ProjI} 
		{\cal P}_C(\varphi_{\gamma, \theta, \lambda}, u, v) := \left\{w\in C:~\left\langle v-w, z-w \right\rangle \leq \varphi_{\gamma, \theta, \lambda}(u, v, w), \quad \forall~ z \in C \right\}.
	\end{equation}
Each point $w\in {\cal P}_C(\varphi_{\gamma, \theta, \lambda}, u, v)$ is called a {\it feasible inexact projection of $v$ onto $C$ relative to $u$ and with relative error tolerance function $\varphi_{\gamma, \theta, \lambda}$}.
\end{definition}
In the following, we present  some remarks about the definition of the feasible inexact projection mapping onto the convex set $C$.
\begin{remark}\label{rem: welldef}
Let   $C\subset {\mathbb R}^n$,    $u\in C$   and $\varphi_{\gamma, \theta, \lambda}$ be as in Definition~\ref{def:InexactProj}. Therefore,    for all $v\in {\mathbb R}^n$,   it follows from \eqref{eq:ProjI}  that  ${\cal P}_C(0, u, v)$ is  the exact projection of $u$ onto $C$; see \cite[Proposition~2.1.3, p. 201]{Bertsekas1999}. Moreover,  ${\cal P}_C(0, u, v) \in {\cal P}_C(\varphi_{\gamma, \theta, \lambda}, u, v)$ concluding  that ${\cal P}_C(\varphi_{\gamma, \theta, \lambda}, u, v)\neq \varnothing$, for all $u\in C$ and $v\in {\mathbb R}^n$. Consequently,  the set-valued mapping ${\cal P}_C(\varphi_{\gamma, \theta, \lambda}, u, \cdot) $ is well-defined.
\end{remark}
Next lemma is a variation of \cite[Lemma 6]{Reiner_Orizon_Leandro2019}. It will play an important role in the remainder of this paper. 

\begin{lemma} \label{pr:cond}
	Let $v \in {\mathbb R}^n$, $u \in C$, $\gamma, \theta, \lambda \geq 0$ and $w\in {\cal P}_C(\varphi_{\gamma, \theta, \lambda}, u, v)$. Then, there holds	
	$$\|w-x\|^2 \leq \|v-x\|^2 + \frac{2\gamma+2\lambda}{1-2\lambda}\|v-u\|^2, \qquad \forall ~x \in C,$$
	for all $ \lambda, \theta \in [0,  1/2)$.
\end{lemma}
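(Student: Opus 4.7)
The plan is to start from the identity
$$\|w-x\|^2 = \|v-x\|^2 - \|w-v\|^2 + 2\langle v-w, x-w\rangle,$$
which follows by writing $w-x = (w-v) + (v-x)$ and then regrouping the cross term using $\langle w-v, v-x\rangle = -\|w-v\|^2 + \langle w-v, w-x\rangle$. The point is that $x\in C$, so I can feed $z=x$ into the defining inequality \eqref{eq:ProjI} for $w\in {\cal P}_C(\varphi_{\gamma,\theta,\lambda},u,v)$, and then apply the relative error bound \eqref{eq:fphi}. This produces
$$\|w-x\|^2 \leq \|v-x\|^2 + (2\theta-1)\|w-v\|^2 + 2\gamma\|v-u\|^2 + 2\lambda\|w-u\|^2.$$
Since $\theta\in[0,1/2)$, the coefficient $2\theta-1$ is nonpositive, so I drop that term to obtain a first-stage estimate in which only $\|w-u\|^2$ still needs to be controlled.

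The main obstacle is that residual $\|w-u\|^2$ term, because it still involves the inexact projection $w$. The trick I would use is self-reference: since $u\in C$, I can repeat exactly the same identity-plus-inequality argument with $x$ replaced by $u$. This gives
$$\|w-u\|^2 \leq \|v-u\|^2 + (2\theta-1)\|w-v\|^2 + 2\gamma\|v-u\|^2 + 2\lambda\|w-u\|^2,$$
and, again discarding the nonpositive $(2\theta-1)\|w-v\|^2$ term and moving $2\lambda\|w-u\|^2$ to the left-hand side, the assumption $\lambda\in[0,1/2)$ lets me divide by $1-2\lambda>0$ to conclude
$$\|w-u\|^2 \leq \frac{1+2\gamma}{1-2\lambda}\,\|v-u\|^2.$$

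To finish, I would substitute this bound back into the first-stage estimate and check the algebraic identity
$$2\gamma + \frac{2\lambda(1+2\gamma)}{1-2\lambda} = \frac{2\gamma(1-2\lambda) + 2\lambda(1+2\gamma)}{1-2\lambda} = \frac{2\gamma+2\lambda}{1-2\lambda},$$
which yields precisely the stated inequality. Conceptually the whole proof is just the textbook ``projection plus $v$-splitting'' identity applied twice, once at the target point $x$ and once at the reference point $u$; the only nontrivial insight is realizing that one must close the loop by using the same inequality on $\|w-u\|^2$, and the hypothesis $\lambda<1/2$ is exactly what makes that absorption legitimate. Note that $\theta<1/2$ is used only to throw away $(2\theta-1)\|w-v\|^2$ terms; a cleaner proof could even keep them (they help) but they are not needed for the stated bound.
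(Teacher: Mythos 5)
Your proposal is correct and follows essentially the same route as the paper's proof: the same splitting identity at $x$, the same self-referential application at $u\in C$ to absorb the $2\lambda\|w-u\|^2$ term using $\lambda<1/2$, and the same final algebraic combination. The closing identity $2\gamma + 2\lambda(1+2\gamma)/(1-2\lambda) = (2\gamma+2\lambda)/(1-2\lambda)$ checks out, so nothing is missing.
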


\begin{proof}
		Let $x \in C$. First note that $\|w-x\|^2 = \|v-x\|^2 - \|w-v\|^2 + 2 \langle v-w, x-w \rangle$. Since  $w \in {\cal P}_C(\varphi_{\gamma, \theta, \lambda}, u, v)$ and  $0\leq \theta < 1/2$, combining the last  equality with \eqref{eq:fphi} and \eqref{eq:ProjI} we obtain
	\begin{align} 
		\|w-x\|^2 &\leq \|v-x\|^2 - (1-2\theta)\|v-w\|^2 + 2\gamma \|v-u\|^2 + 2\lambda \|w-u\|^2\notag\\
		              &\leq \|v-x\|^2  + 2\gamma \|v-u\|^2 + 2\lambda \|w-u\|^2 \label{eq:fg}.
	\end{align}
	On the other hand, we also have
	\begin{align*}
		\|w-u\|^2 &= \|v-u\|^2 + \|w-v\|^2 + 2 \langle v-w,u-v \rangle \\
							&= \|v-u\|^2 + \|w-v\|^2 + 2 \langle v-w,u-w \rangle - 2 \|w-v\|^2 \\
							&= \|v-u\|^2 - \|w-v\|^2 + 2 \langle v-w,u-w \rangle.
	\end{align*}
	Thus, due to $w\in {\cal P}_C(\varphi_{\gamma, \theta, \lambda}, u, v)$ and $u \in C$, using \eqref{eq:fphi}, \eqref{eq:ProjI}, $0\leq \theta < 1/2$ and $0 \leq \lambda <1/2$,  we have
	$$\|w-u\|^2 \leq \frac{1+2\gamma}{1-2\lambda}\|v-u\|^2 - \frac{1-2\theta}{1-2\lambda} \|w-v\|^2\leq \frac{1+2\gamma}{1-2\lambda}\|v-u\|^2.$$
	Therefore, combining the last inequality with \eqref{eq:fg},  we obtain the desired inequality.
\end{proof}

The conceptual  subgradient method with feasible inexact projections for solving the Problem~\eqref{eq:OptP} is formally defined as follows:\\

\begin{algorithm} [H]
\begin{description}
	\item[ Step 0.] Let $\{\epsilon_k\}$, $\{\theta_k\}$,  and $\{\lambda_k\}$ be sequences of nonnegative real numbers. Let $x_0\in C$ and set $k=0$.
	\item[ Step 1.] If $0\in \partial f(x_k)$, then {\bf stop}. Otherwise, choose a non-null  element $s_k \in \partial_{\epsilon_k} f(x_k)$, compute a  stepsize $t_k>0$,  (to be specified later),    and take the next iterate as any point such that 
$$
		x_{k+1} \in {\cal P}_C\left(\varphi_{ \gamma_k, \theta_k, \lambda_k}, x_k, x_k-t_ks_k\right).
$$
	\item[ Step 2.]  Set $k\gets k+1$, and go to \textbf{Step~1}.
\end{description}
\caption{Subgradient-InexP method}
\label{Alg:INP}
\end{algorithm}
\bigskip
\noindent
Let us  describe the main features of the subgradient-InexP method. Firstly,  we check if the current iterate $x_k$ is a solution of Problem \eqref{eq:OptP}.  If  $x_k$ is not a a solution, then we choose a non-null  element $s_k \in \partial_{\epsilon_k} f(x_k)$, compute a stepsize  $t_k>0$,  and take the next iterate $x_{k+1}\in C$ as any feasible inexact projection of $x_k-t_ks_k$ onto $C$ relative to $x_{k}$ with error tolerance given by  $\varphi_{ \gamma_k, \theta_k, \lambda_k}(x_k, x_k-t_ks_k,x_{k+1})  $, i.e., $x_{k+1} \in {\cal P}_C\left(\varphi_{ \gamma_k, \theta_k, \lambda_k}, x_k, x_k-t_ks_k\right)$.  We remark that if $\varphi_{ \gamma_k, \theta_k, \lambda_k}\equiv~0$, then ${\cal P}_C\left(0, x_k, x_k-t_ks_k\right)$ is the exact projection of $x_k-t_ks_k$ onto $C$,  and Algorithm~\ref{Alg:INP} amounts to the projected subgradient method  studied in \cite{AlberIusemSolodov1998}. Among the several possible choices that appeared in literature on subject,  see for example \cite{Bertsekas1999, nedic_bertsekas2001, Shor1985}, we studied three well known strategies, beginning with exogenous stepsize.
\begin{Rule}[Exogenous stepsize]\label{Exogenous.Step} 
	Let $ \mu\geq 0$. Take exogenous sequences $\{\alpha_k\}$ and $\{\epsilon_k\}$ of nonnegative real numbers satisfying the following conditions: the sequence $\{\epsilon_k\}$ is nonincreasing and 
	\begin{equation}\label{eq:ExogSeq}
		\sum_{k=0}^{\infty}\alpha_k=+\infty, \qquad \qquad  \sum_{k=0}^{\infty}\alpha_k^2<+\infty, \qquad \qquad   \epsilon_k\leq \mu \alpha_ k, \qquad \qquad  ~k=0, 1, \ldots.  
	\end{equation} 
Given $s_k \in \partial_{\epsilon_k} f(x_k)$, define the stepsize $t_k $ as  the following   nonnegative real number 
	\begin{equation} \label{eq:StepSize1}
		t_k:=\frac{\alpha_k}{\eta_k},   \qquad \qquad    \eta_k:= \max\left\{1, \| s_k\|\right\},  \qquad  \qquad  ~k=0, 1, \ldots.
	\end{equation}
\end{Rule}

\noindent
The stepsize in Rule~\ref{Exogenous.Step} is one the most popular. It have been used in several paper for analyzing subgradient method; see for example, \cite{AlberIusemSolodov1998,CorreaLemarecha1993,nedic_bertsekas2001rate,nedic_bertsekas2001,xmwang2018}.  

{\it From now on we assume that there exist  $0\leq \bar{\theta} < 1/2$ and $0\leq \bar{\lambda} < 1/2$, such that  $\{\theta_k\}\subset [0, {\bar \theta})$, $\{\gamma_k\}\subset [0, {\bar \gamma})$ and $\{\lambda_k\}\subset [0, {\bar \lambda})$. For future references define}

\begin{equation} \label{eq:nu}
	\nu := \frac{1+2{\bar \gamma}}{1-2{\bar \lambda}}> 0.
\end{equation}
To define the next stepsize, we need to known the optimum value $f^*$ given in  \eqref{eq:ValueOpt}. In \cite[p.142]{PolyakBook}, is present some examples of problems for which   the optimum value are known. The statement of the Polyak's stepsize is as follows.

\begin{Rule}[Polyak's stepsize]\label{Poliak.Step} 
	Assume that $\Omega^*\neq\varnothing$ and the optimal value   $f^*>-\infty$ is known. Let $\mu\geq 0$, $ \underline{\beta} >0$,  ${\bar \beta} >0$ and take exogenous sequences $\{\beta_k\}$ and $\{\epsilon_k\}$ of real numbers satisfying the following conditions: the sequence $\{\epsilon_k\}$ is nonincreasing and 

	\begin{equation}\label{beta}
		0< \underline{\beta} \leq \beta_k \leq \bar{\beta} < \frac{1}{2\mu +\nu}, \qquad\qquad  0<\epsilon_k\leq \mu \beta_ k[f(x_k)-f^*], \quad \qquad  ~k=0, 1, \ldots.
	\end{equation}
Given $s_k \in \partial_{\epsilon_k} f(x_k)$, $s_k\neq 0$, define the stepsize $t_k $ as  the following nonnegative real number 
	\begin{equation}\label{StepsizePolyak}
		t_k=\beta_k\frac{f(x_k)-f^*}{\left\|s_k\right\|^2},   \qquad \qquad  ~k=0, 1, \ldots.
	\end{equation}
\end{Rule}

\noindent
The stepsize in Rule~\ref{Poliak.Step} was introduced in \cite{Polyak1969} and has been used in several papers, including the ones  \cite{nedic_bertsekas2001rate,nedic_bertsekas2001, xmwang2018}. In general, in practical problems the optimum value \eqref{eq:ValueOpt} is not known. In this case, we may modify the stepsize \eqref{StepsizePolyak} by replacing the optimum value \eqref{eq:ValueOpt} with a suitable estimate in each iteration. This leads to the dynamic stepsize rule as follows.

\begin{Rule}[Dynamic stepsize]\label{Dynamic.Step} 
Let $\mu\geq 0$,  $ \underline{\beta} >0$,  ${\bar\beta}>0$,  and take exogenous sequences $\{\beta_k\}$ and $\{\epsilon_k\}$ of real numbers satisfying the following conditions: the sequence $\{\epsilon_k\}$ is nonincreasing and
	\begin{equation}\label{betaDinamic}
		0< \underline{\beta} \leq \beta_k \leq \bar{\beta} < \frac{2}{2\mu+\nu}, \qquad \qquad 0<\epsilon_k\leq \mu \beta_ k[f(x_k)-f_{k}^{lev}], \quad \qquad  ~k=0, 1, \ldots, 
	\end{equation}
where $f_{k}^{lev}$ will be specified later (see Section~\ref{Sec:Analyisdynamic}).  Given $s_k \in \partial_{\epsilon_k} f(x_k)$ such that  $s_k\neq 0$, define the stepsize $t_k $ as  the following nonnegative real number 
	\begin{equation}\label{eq.dynamicstep}
		t_k= \frac{\tilde{t}_k}{\left\|s_k\right\|}, \qquad\qquad  \tilde{t}_k = \beta_k\frac{f(x_k)-f_{k}^{lev}}{\left\|s_k\right\|}, \qquad\qquad  ~k=0, 1, \ldots.
	\end{equation}
\end{Rule}

\noindent
The dynamic stepsize  in Rule \ref{Dynamic.Step} is based on the ideas of \cite{brannlund1993}; see also \cite{GoffinKiwiel1999}. This rule has been used in several papers, see for example \cite{nedic_bertsekas2001,NedicBertsekas2010, xmwang2018}.  

{\it From now on we assume that the sequence $\{x_k\}$ is generated by Algorithm~\ref{Alg:INP}, with one  of the three above strategies for choosing the stepsize, is  infinite.}
\section{Analysis of the subgradient-InexP method} \label{Sec:aca}
In the following, we state and prove our first result to analyze the sequence $\{x_k\}$ generated by Algorithm~\ref{Alg:INP}. The  obtained inequality in next lemma is its counterpart for unconstrained optimization provided in \cite[Lemma 1.1)]{CorreaLemarecha1993}. As we shall see, this inequality will be the main tool in our asymptotic convergence analysis, as well as in the iteration-complexity analysis. 
\begin{lemma} \label{Le:FejerConv}
	Let $\nu>0$  be as defined \eqref{eq:nu}. For all $x \in C$, the following inequality holds
	\begin{equation} \label{eq:MainIneq}
		\|x_{k+1}-x\|^2 \leq \|x_k-x\|^2 + \nu t_k^2\|s_k\|^2 - 2 t_k \left[f(x_k) - f(x) -\epsilon_k\right], \qquad k=0, 1, \ldots. 
	\end{equation}
\end{lemma}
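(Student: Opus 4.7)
The plan is to reduce everything to Lemma~\ref{pr:cond} applied at the unprojected point and then invoke the defining inequality of the $\epsilon_k$-subdifferential.

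First I would note that $x_{k+1} \in {\cal P}_C(\varphi_{\gamma_k,\theta_k,\lambda_k}, x_k, x_k - t_k s_k)$, so Lemma~\ref{pr:cond} applies with $u = x_k \in C$, $v = x_k - t_k s_k$, $w = x_{k+1}$, and any $x \in C$. This yields
\begin{equation*}
\|x_{k+1} - x\|^2 \leq \|x_k - t_k s_k - x\|^2 + \frac{2\gamma_k + 2\lambda_k}{1 - 2\lambda_k}\, t_k^2 \|s_k\|^2,
\end{equation*}
since $\|v - u\|^2 = t_k^2 \|s_k\|^2$. This converts the question into a bound at the \emph{exact} subgradient step $x_k - t_k s_k$, paying only a multiplicative penalty on the $t_k^2\|s_k\|^2$ term.

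Next I would expand the first term on the right as
\begin{equation*}
\|x_k - t_k s_k - x\|^2 = \|x_k - x\|^2 - 2 t_k \langle s_k, x_k - x\rangle + t_k^2 \|s_k\|^2,
\end{equation*}
and use $s_k \in \partial_{\epsilon_k} f(x_k)$, which by \eqref{eq:e-subdif} gives $f(x) \geq f(x_k) + \langle s_k, x - x_k\rangle - \epsilon_k$, i.e., $\langle s_k, x_k - x\rangle \geq f(x_k) - f(x) - \epsilon_k$. Substituting,
\begin{equation*}
\|x_k - t_k s_k - x\|^2 \leq \|x_k - x\|^2 - 2 t_k [f(x_k) - f(x) - \epsilon_k] + t_k^2 \|s_k\|^2.
\end{equation*}

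Finally I would combine the two displays. The coefficient of $t_k^2\|s_k\|^2$ becomes
\begin{equation*}
1 + \frac{2\gamma_k + 2\lambda_k}{1 - 2\lambda_k} = \frac{1 + 2\gamma_k}{1 - 2\lambda_k} \leq \frac{1 + 2\bar\gamma}{1 - 2\bar\lambda} = \nu,
\end{equation*}
where the inequality uses $\gamma_k < \bar\gamma$ and $\lambda_k < \bar\lambda < 1/2$, and the last equality is \eqref{eq:nu}. This yields exactly \eqref{eq:MainIneq}.

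There is no real obstacle: the proof is an assembly of Lemma~\ref{pr:cond} and the $\epsilon$-subgradient inequality. The only bookkeeping point deserving care is checking that the inexact-projection penalty $(2\gamma_k + 2\lambda_k)/(1-2\lambda_k)$ combined with the $+1$ from expanding the square collapses to $(1+2\gamma_k)/(1-2\lambda_k)$, which is cleanly bounded by $\nu$; this is what makes $\nu$ the right normalizing constant throughout the paper.
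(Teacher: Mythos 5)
Your proof is correct and follows essentially the same route as the paper: apply Lemma~\ref{pr:cond} at $u=x_k$, $v=x_k-t_ks_k$, $w=x_{k+1}$, expand the square, invoke the $\epsilon_k$-subgradient inequality, and collapse the coefficient of $t_k^2\|s_k\|^2$ to $(1+2\gamma_k)/(1-2\lambda_k)\leq\nu$. Nothing is missing.
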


\begin{proof}
	Let $x \in C$. To simply the notations we set $z_k:= x_k-t_ks_k$. Due to $x_{k+1} \in {\cal P}_C\left(\varphi_{\gamma_k, \theta_k, \lambda_k}, x_k, z_k\right)$ and $x_k \in C$, we apply Lemma~\ref{pr:cond} with $w=x_{k+1}$, $v=z_k$,  $u=x_k$,   and $\varphi_{ \gamma, \theta, \lambda}=\varphi_{ \gamma_k, \theta_k, \lambda_k}$ to conclude 
	\begin{equation} \label{eq:mip}
		\|x_{k+1}-x\|^2 \leq \|z_k-x\|^2+ \frac{2\gamma_k+2\lambda_k}{1-2\lambda_k}t_k ^2\|s_k\| ^2.
	\end{equation}
	On the other hand, due to  $z_k= x_k-t_ks_k$,  after some algebraic manipulations,  we obtain 
	$$\|z_k-x\|^2 = \|x_k-x\|^2 + t_k ^2\|s_k\| ^2 + 2t_k \langle s_k, x - x_k \rangle.$$
	Since $s_k \in \partial_{\epsilon_k} f(x_k)$, the definition \eqref{eq:e-subdif} implies that $\langle s_{k}, z-x_k\rangle\leq  f(z)- f(x_k)+\epsilon_k$. Thus, 
	$$\|z_k-x\|^2 \leq \|x_k-x\|^2 + t_k ^2\|s_k\| ^2 +  2t_k\left[f(x) - f(x_k) + \epsilon_k\right].$$
	Therefore, combining last inequality with \eqref{eq:mip} we conclude that 	
	\begin{align*} 
	\|x_{k+1}-x\|^2 &\leq \|x_k-x\|^2 + t_k ^2\|s_k\|^2 + 2t_k\left[f(x) - f(x_k) + \epsilon_k\right] + \frac{2\gamma_k+2\lambda_k}{1-2\lambda_k}t_k^2 \|s_k\|^2\\
	                       &= \|x_k-x\|^2 + \frac{1+2\gamma_k}{1-2\lambda_k}t_k^2 \|s_k\|^2- 2 t_k \left[f(x_k) - f(x) -\epsilon_k\right].
	\end{align*}
 Considering that $0 \leq \lambda_k < \bar{\lambda}<1/2$ and $0 \leq \gamma_k < \bar{\gamma}$, and using \eqref{eq:nu}, we obtain \eqref{eq:MainIneq}.
\end{proof}

\subsection{Analysis of the subgradient-InexP method with exogenous stepsize} \label{Sec:AnalyisExog}
In this section we will analyze the subgradient-InexP method with stepsizes satisfying Rule~\ref{Exogenous.Step}. For that, {\it throughout this section we assume also that $\{x_k\}$ is a sequence generated by Algorithm~\ref{Alg:INP} with the stepsize given by  Rule~\ref{Exogenous.Step} and, define
\begin{equation} \label{eq:rho}
	\rho:= \nu + 2\mu > 0.
\end{equation}
}
First of all, note that under the above  assumptions,  Lemma~\ref{Le:FejerConv} becomes as follows.
\begin{lemma} \label{Le:FejerConvExog}
	Let $\rho>0$  be as in  \eqref{eq:rho}. For all $x\in C$, the following inequality holds
		$$\|x_{k+1}-x\|^2\leq \|x_k-x\|^2 + \rho \alpha_k^2 - 2 \frac{\alpha_k}{\eta_k} \left[f(x_k)-f(x)\right], \qquad k=0, 1, \ldots.$$
\end{lemma}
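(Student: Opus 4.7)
The plan is to obtain the desired inequality as a direct specialization of Lemma~\ref{Le:FejerConv} by substituting the exogenous stepsize $t_k = \alpha_k/\eta_k$ with $\eta_k = \max\{1, \|s_k\|\}$, and then bounding the two ``error'' terms $\nu t_k^2\|s_k\|^2$ and $2t_k\epsilon_k$ separately using the properties from Rule~\ref{Exogenous.Step}.

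First I would write down the conclusion of Lemma~\ref{Le:FejerConv} for an arbitrary $x\in C$:
\[
\|x_{k+1}-x\|^2 \leq \|x_k-x\|^2 + \nu t_k^2\|s_k\|^2 + 2t_k\epsilon_k - 2t_k\bigl[f(x_k)-f(x)\bigr].
\]
Next I would handle the gradient term. Since $\eta_k = \max\{1,\|s_k\|\} \geq \|s_k\|$, we have $\|s_k\|/\eta_k \leq 1$, hence
\[
\nu t_k^2 \|s_k\|^2 \;=\; \nu\,\alpha_k^2\,\frac{\|s_k\|^2}{\eta_k^2} \;\leq\; \nu\,\alpha_k^2.
\]

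Then I would handle the inexactness term. Using $\eta_k \geq 1$ together with the bound $\epsilon_k \leq \mu\alpha_k$ from \eqref{eq:ExogSeq}, we get
\[
2 t_k \epsilon_k \;=\; 2\,\frac{\alpha_k}{\eta_k}\,\epsilon_k \;\leq\; 2\alpha_k\epsilon_k \;\leq\; 2\mu\alpha_k^2.
\]
Adding these two bounds and invoking the definition $\rho = \nu + 2\mu$ from \eqref{eq:rho} yields $\nu t_k^2\|s_k\|^2 + 2t_k\epsilon_k \leq \rho\alpha_k^2$. Substituting this back, and keeping the last term as $-2(\alpha_k/\eta_k)[f(x_k)-f(x)]$ per the definition of $t_k$, gives exactly the claimed inequality.

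There is no real obstacle here; the estimate is essentially a two-line algebraic manipulation once Lemma~\ref{Le:FejerConv} is in hand. The only point requiring a moment of care is recognizing that the definition $\eta_k = \max\{1,\|s_k\|\}$ is engineered precisely to control both ratios $\|s_k\|/\eta_k \leq 1$ and $1/\eta_k \leq 1$ simultaneously, which is what allows the two error terms to be absorbed into a single $\rho\alpha_k^2$ quantity independent of $\|s_k\|$.
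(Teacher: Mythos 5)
Your proposal is correct and follows the same route as the paper: specialize Lemma~\ref{Le:FejerConv} with $t_k=\alpha_k/\eta_k$, then use $t_k\|s_k\|\leq\alpha_k$ and $t_k\epsilon_k\leq\mu\alpha_k^2$ to absorb the two error terms into $\rho\alpha_k^2$. No issues.
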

\begin{proof}
	The definition of $t_k$ in \eqref{eq:StepSize1} implies $t_k\leq \alpha_k$, which combined with the last inequality in  \eqref{eq:ExogSeq} yields $2t_k\epsilon_k\leq 2\mu \alpha_k^2$. Moreover, \eqref{eq:StepSize1} also implies that $t_k^2\|s_k\|^2\leq \alpha_k^2$. Therefore, using \eqref{eq:rho}, the desired inequality follows directly from  \eqref{eq:MainIneq}.
\end{proof}
To proceed with the analysis of Algorithm~\ref{Alg:INP},  we also need the following auxiliary set 
\begin{equation} \label{eq:DefOmega}
	\Omega:=\left\{x\in C:~f(x)\leq \inf_kf(x_k)\right\}.
\end{equation}
It is worth mentioning that, in principle,  set $\Omega$ can be empty and,  in such case,  $f^*=-\infty$. In the next lemma we analyze the behavior of the sequence $\{x_k\}$ under the hypothesis that $\Omega \neq \varnothing$. 
\begin{lemma} \label{Le:BoundExog}
  If $\Omega \neq \varnothing$, then $\{x_k\}$ is quasi-F\'ejer convergent to $\Omega$. Consequently, $\{x_k\}$ is bounded. 
\end{lemma}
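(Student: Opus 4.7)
My plan is to verify the two clauses of Definition~\ref{def:QuasiFejer} directly from the one-step inequality in Lemma~\ref{Le:FejerConvExog}, and then invoke Theorem~\ref{teo.qf} for boundedness.

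First I would pick an arbitrary $x \in \Omega$. By the definition \eqref{eq:DefOmega}, $f(x) \le \inf_k f(x_k) \le f(x_k)$ for every $k$, so the bracketed term $f(x_k)-f(x)$ appearing in Lemma~\ref{Le:FejerConvExog} is nonnegative. Since $\alpha_k/\eta_k \ge 0$, the subtracted quantity $2(\alpha_k/\eta_k)[f(x_k)-f(x)]$ is nonnegative and may be discarded, yielding
\begin{equation*}
\|x_{k+1}-x\|^2 \le \|x_k-x\|^2 + \rho\,\alpha_k^2, \qquad k=0,1,\ldots.
\end{equation*}

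Next I would set $\delta_k := \rho\,\alpha_k^2$. Clearly $\delta_k \ge 0$, and from the second condition of \eqref{eq:ExogSeq} we get $\sum_{k=0}^{\infty}\delta_k = \rho\sum_{k=0}^{\infty}\alpha_k^2 < +\infty$. Because $x \in \Omega$ was arbitrary, this sequence $\{\delta_k\}$ (which does not depend on $x$) witnesses quasi-Fej\'er convergence of $\{x_k\}$ to $\Omega$ in the sense of Definition~\ref{def:QuasiFejer}.

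Finally, the boundedness of $\{x_k\}$ follows immediately from the first part of Theorem~\ref{teo.qf}. I do not foresee any real obstacle here: the nontrivial work has already been done in Lemma~\ref{Le:FejerConvExog}, and all that remains is to observe that on $\Omega$ the troublesome term in \eqref{eq:MainIneq} has a favorable sign and that $\rho\alpha_k^2$ is summable by hypothesis.
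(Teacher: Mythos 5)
Your proof is correct and follows essentially the same route as the paper: drop the nonnegative term $2(\alpha_k/\eta_k)[f(x_k)-f(x)]$ using the definition of $\Omega$, take $\delta_k=\rho\alpha_k^2$, and invoke Theorem~\ref{teo.qf}. If anything, you are slightly more careful than the paper, which cites the first condition of \eqref{eq:ExogSeq} where the summability condition $\sum_k\alpha_k^2<+\infty$ is the one actually needed, as you correctly use.
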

\begin{proof}
	Since $\Omega \neq \varnothing$, take  $x\in \Omega$. Thus, by using the definition of $\Omega$ in \eqref{eq:DefOmega} and Lemma~\ref{Le:FejerConvExog}, we conclude that $\|x_{k+1}-x\|^2\leq \|x_k-x\|^2 + \rho \alpha_k^2$, for all $k=0, 1, \ldots$. Hence, using the first inequality in \eqref{eq:ExogSeq}, the first statement of the lemma follows from Definition~\ref{def:QuasiFejer}. The second statement of the lemma follows from the first part of Theorem~\ref{teo.qf}.
\end{proof}
Now, we are ready to prove the main result of this section, which refers to the asymptotic convergence of $\{x_k\}$. We remark that in the first part of the next theorem we do not assume neither  $\Omega^* \neq \varnothing$ nor that  $f^*$ is  finite.

\begin{theorem}\label{teo.Main}
	The following equality holds

	\begin{equation} \label{eq:linfs}
		\liminf_k f(x_k)=f^*.
	\end{equation}
	In addition, if $\Omega^* \neq \varnothing$ then the sequence $\{x_k\}$ converges to a point $x_*\in\Omega^*$.
\end{theorem}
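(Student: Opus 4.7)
My plan is to prove the two claims separately. The first, $\liminf_k f(x_k)=f^*$, will be argued by contradiction using Lemma~\ref{Le:FejerConvExog}, while the second will combine Lemma~\ref{Le:BoundExog} with the cluster-point half of Theorem~\ref{teo.qf}. The first assertion is the more delicate one because it must be established \emph{without} assuming $\Omega^*\neq\varnothing$, so I cannot directly invoke the quasi-Fej\'er machinery; boundedness of the iterates must be produced en route.

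For \eqref{eq:linfs}, I would suppose, for the sake of contradiction, that $\liminf_k f(x_k) > f^*$. Then I can pick $\hat x\in C$ with $f(\hat x)$ strictly smaller than $\liminf_k f(x_k)$, and choose $\delta>0$ and an index $k_0$ so that $f(x_k)-f(\hat x)\geq \delta$ for all $k\geq k_0$. Because $f(\hat x)\leq \inf_k f(x_k)$, the point $\hat x$ lies in the set $\Omega$ of \eqref{eq:DefOmega}, so Lemma~\ref{Le:BoundExog} applies and yields that $\{x_k\}$ is bounded. Then Proposition~\ref{pr:CompE-subdif} furnishes a uniform bound $\|s_k\|\leq L$, hence $\eta_k=\max\{1,\|s_k\|\}\leq \max\{1,L\}=:M$. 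Substituting into Lemma~\ref{Le:FejerConvExog} with $x=\hat x$ gives, for $k\geq k_0$,
\[
	\|x_{k+1}-\hat x\|^2 \leq \|x_k-\hat x\|^2 + \rho\alpha_k^2 - \frac{2\delta}{M}\alpha_k.
\]
Telescoping from $k_0$ to $N$ and using $\sum\alpha_k^2<\infty$ together with $\sum\alpha_k=\infty$ from \eqref{eq:ExogSeq}, the right-hand side tends to $-\infty$, contradicting the non-negativity of the left-hand side. This delivers \eqref{eq:linfs}.

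For the second assertion, assume $\Omega^*\neq\varnothing$ and fix any $x^*\in\Omega^*$. Since $f(x^*)=f^*\leq \inf_k f(x_k)$, we have $x^*\in\Omega$, so Lemma~\ref{Le:BoundExog} again applies: $\{x_k\}$ is quasi-Fej\'er convergent to $\Omega$ and in particular bounded. By the first part, some subsequence $\{x_{k_j}\}$ satisfies $f(x_{k_j})\to f^*$; extracting a further subsequence converging to some $x_*$ (possible by boundedness) and using the continuity of $f$ together with the closedness of $C$, we obtain $x_*\in C$ and $f(x_*)=f^*$, i.e.\ $x_*\in\Omega^*\subseteq\Omega$. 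Now $x_*$ is a cluster point of $\{x_k\}$ lying in $\Omega$, so the second part of Theorem~\ref{teo.qf} forces the full sequence $\{x_k\}$ to converge to $x_*$, completing the proof.

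The main obstacle is the first assertion: one must establish boundedness of $\{x_k\}$ (and of the $\epsilon$-subgradients $\{s_k\}$) \emph{as a consequence of the contradiction hypothesis}, since we do not assume a priori that $\Omega^*$ is nonempty or even that $f^*$ is finite. Once this is in hand, the telescoping argument is routine, and the second assertion then reduces to extracting a suitable cluster point and applying Theorem~\ref{teo.qf}.
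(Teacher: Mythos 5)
Your proposal is correct and follows essentially the same route as the paper's proof: for \eqref{eq:linfs}, a contradiction argument that uses $\Omega\neq\varnothing$ to get boundedness of $\{x_k\}$ and hence of $\{s_k\}$ and $\eta_k$, then telescopes Lemma~\ref{Le:FejerConvExog} and plays $\sum_k\alpha_k=\infty$ against $\sum_k\alpha_k^2<\infty$; for the second claim, extraction of a cluster point in $\Omega^*\subseteq\Omega$ followed by Theorem~\ref{teo.qf}. One small imprecision (which the paper shares in mirror image): choosing $\hat x$ with $f(\hat x)<\liminf_k f(x_k)$ does not by itself yield $f(\hat x)\leq\inf_k f(x_k)$, so $\hat x\in\Omega$ is not immediate; this is harmless because Lemma~\ref{Le:FejerConvExog} only needs $\hat x\in C$ and the boundedness step only needs $\Omega\neq\varnothing$, which does follow from the contradiction hypothesis by a short separate argument (either $\inf_kf(x_k)>f^*$, in which case some $x\in C$ lies strictly below it, or the infimum is attained at a finite index).
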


\begin{proof}
	Assume by contradiction that $\liminf_k f(x_k) > f^*$. In this case, we have $\Omega \neq \varnothing$. Consequently by Lemma \ref{Le:BoundExog}, we conclude that $\{x_k\}$ is bounded. Letting $x\in \Omega$, there exist $\tau>0$ and $k_0\in\mathbb{N}$ such that $f(x)<f(x_k)-\tau,$ for all $k\geq k_0$. Hence, using Lemma~\ref{Le:FejerConvExog}, we have
	\begin{equation}\label{eq:bXk}
		\|x_{k+1}-x\|^2 \leq \|x_{k}-x\|^2 + \rho \alpha_k^2- 2 \frac{\alpha_ k}{\eta_k} \tau, \qquad  k=k_0, k_0+1, \dots.
	\end{equation}
	On the other hand, it follows  from  \eqref{eq:ExogSeq}  that the sequence $\left\lbrace \epsilon_k \right\rbrace$ is bounded.  Thus,  considering that $\{x_k\}$ is bounded,  Proposition~\ref{pr:CompE-subdif} implies that $\{s_k\}$ is also bounded. Let $c > 0$  be such that $\|s_k\| \leq c$, for all $k \geq  0$. Hence,  using  second equality in  \eqref{eq:StepSize1}, we have  $\eta_k = \max\left\{1, \| s_k\|\right\} \leq \max\left\{1, c \right\} =: \varGamma$. Thus, letting $\ell \in \mathbb{N}$ and using \eqref{eq:bXk}, we conclude that
	$$\frac{2\tau}{\varGamma}\sum_{j=k_0}^{\ell+k_0}\alpha_j \leq \|x_{k_0}-x\|^2 - \|x_{k_0+ \ell+1}-x\| + \rho \sum_{j=k_0}^{\ell+k_0}\alpha^2_j \leq \|x_{k_0}-x\|^2 + \rho \sum_{j=k_0}^{\ell+k_0}\alpha^2_j.$$
	Since the last inequality holds for all $\ell \in \mathbb{N}$ then, by  using the first two conditions on $\{\alpha_k\}$ in \eqref{eq:ExogSeq},  we have a contraction. Therefore, \eqref{eq:linfs} holds. For proving the last statement, let us assume that $\Omega^*\neq\varnothing$. In this case, we also have $\Omega\neq\varnothing$ and, from Lemma~\ref{Le:BoundExog}, the sequence $\{x_k\}$ is bounded and   quasi-F\'ejer convergent to $\Omega$. The equality \eqref{eq:linfs} implies that $\{f(x_k)\}$ has a decreasing monotonous subsequence $\{f(x_{k_j})\}$ such that $\lim_{j\rightarrow \infty}f(x_{k_j})= f^*.$ Without lose of generality, we can assume  that  $\{f(x_k)\}$ is decreasing, is monotonous, and converges to $f^*$. Being bounded, the sequence $\{x_k\}$ has  a convergent subsequence $\{x_{k_\ell}\}$. Let us say that $\lim_{\ell\rightarrow\infty}x_{k_\ell}=x_*,$ which by the continuity of $f$ implies $f(x_*)=\lim_{\ell\rightarrow\infty}f(x_{k_\ell})=f^*,$ and then $x_*\in\Omega^*$. Hence, $\{x_k\}$ has an cluster point $x_*\in\Omega$, and due to $\{x_k\}$ be quasi-F\'ejer convergent to $\Omega$, Theorem~\ref{teo.qf} implies that $\{x_k\}$ converges to $x_*$.
\end{proof}
Next theorem presents an iteration-complexity bound; similar bound can be found in   \cite[Theorem~3.2.2]{Nesterov2004}.
\begin{theorem}\label{teo:complrule1}
	Assume that the sequence $\{x_k\}$ converges to a point $x_*\in\Omega^*$. Then, for every $N \in \mathbb{N}$, the following inequality holds
	$$\min \{f(x_k) - f^{*}:~ \, k = 0, 1,\ldots, N \} \leq \varGamma \frac{\|x_0 - x_{*}\|^2 + \rho\sum_{k=0}^{N}\alpha_k^{2}}{2\sum_{k=0}^{N}\alpha_k}.$$
\end{theorem}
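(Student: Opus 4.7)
The plan is to derive the bound by telescoping the Fej\'er-type inequality from Lemma~\ref{Le:FejerConvExog} with $x=x_{*}\in\Omega^{*}$, and then using the minimum to replace the weighted average. This is the standard argument for subgradient-method complexity, adapted to account for the scaled stepsize $t_k=\alpha_k/\eta_k$.

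First, I would apply Lemma~\ref{Le:FejerConvExog} at $x = x_{*}$, yielding
\[
\|x_{k+1}-x_{*}\|^{2} \leq \|x_{k}-x_{*}\|^{2} + \rho\alpha_{k}^{2} - \frac{2\alpha_{k}}{\eta_{k}}\bigl[f(x_{k})-f^{*}\bigr].
\]
Since the sequence $\{x_k\}$ converges, it is bounded, and by the second statement of Proposition~\ref{pr:CompE-subdif} together with the fact that $\{\epsilon_k\}$ is bounded (from \eqref{eq:ExogSeq}), the sequence $\{s_k\}$ is bounded. Hence, as in the proof of Theorem~\ref{teo.Main}, there exists a constant $\varGamma \geq 1$ such that $\eta_k = \max\{1,\|s_k\|\} \leq \varGamma$ for all $k$. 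Using that $f(x_{k})-f^{*} \geq 0$ (because $x_{*}$ is optimal), one obtains $\frac{2\alpha_k}{\eta_k}[f(x_k)-f^{*}] \geq \frac{2\alpha_k}{\varGamma}[f(x_k)-f^{*}]$, so the inequality strengthens to
\[
\|x_{k+1}-x_{*}\|^{2} \leq \|x_{k}-x_{*}\|^{2} + \rho\alpha_{k}^{2} - \frac{2\alpha_{k}}{\varGamma}\bigl[f(x_{k})-f^{*}\bigr].
\]

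Next, I would sum this inequality from $k=0$ to $k=N$, observe that the left-hand side collapses telescopically to $\|x_{N+1}-x_{*}\|^{2} \geq 0$, and rearrange to obtain
\[
\frac{2}{\varGamma}\sum_{k=0}^{N}\alpha_{k}\bigl[f(x_{k})-f^{*}\bigr] \leq \|x_{0}-x_{*}\|^{2} + \rho\sum_{k=0}^{N}\alpha_{k}^{2}.
\]
Finally, I would bound the weighted sum below by the minimum: setting $m_N := \min\{f(x_k)-f^{*} : k=0,1,\ldots,N\}$, we have $\sum_{k=0}^{N}\alpha_{k}[f(x_{k})-f^{*}] \geq m_N \sum_{k=0}^{N}\alpha_k$. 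Dividing through by $(2/\varGamma)\sum_{k=0}^{N}\alpha_k$ (which is positive because each $\alpha_k>0$ appears in a divergent series) yields the claimed bound.

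There is no real obstacle here; the only subtle point is justifying the uniform bound $\eta_k \leq \varGamma$, which requires combining boundedness of $\{x_k\}$ (given in the hypothesis via convergence) with Proposition~\ref{pr:CompE-subdif} to bound $\{s_k\}$. All other steps are a telescoping sum followed by the elementary estimate replacing a weighted average by its minimum.
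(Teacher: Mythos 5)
Your proposal is correct and follows essentially the same route as the paper's proof: apply Lemma~\ref{Le:FejerConvExog} at $x=x_*$, bound $\eta_k\leq\varGamma$ via Proposition~\ref{pr:CompE-subdif}, telescope the sum over $k=0,\ldots,N$, and replace the weighted sum by its minimum. Your explicit remark that $f(x_k)-f^*\geq 0$ is needed to pass from $\eta_k$ to $\varGamma$ is a point the paper leaves implicit, but the argument is otherwise identical.
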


\begin{proof}
	Since $\left\lbrace \epsilon_k \right\rbrace $ and $\left\lbrace x_k\right\rbrace $ are bounded sequences, then using Proposition \ref{pr:CompE-subdif}, it follows that $\left\lbrace s_k\right\rbrace$ is  also bounded, i.e. there exists $c > 0$ such that $\|s_k\| \leq c$, for all $k \geq 0$. Therefore, using the definition of $\eta_k$ in \eqref{eq:StepSize1},  we have $\eta_k = \max\left\{1, \| s_k\|\right\} \leq \max\left\{1, c \right\} =: \varGamma$. Now, applying Lemma~\ref{Le:FejerConvExog} with $x = x_*$ and due to  $f^* = f(x_*)$, we obtain
	$$\frac{2 \alpha_k}{\varGamma} [f(x_k)-f^*]  \leq \|x_k-x_*\|^2 -\|x_{k+1}-x_*\|^2+ \rho\alpha_k^2 , \qquad k = 0,1, \ldots.$$
	Hence, performing the sum of the above inequality for $k = 0, 1, \ldots, N,$ we have
	$$\frac{2}{\varGamma} \sum_{k=0}^{N} \alpha_k [f(x_k)-f^*] \leq \|x_0-x_*\|^2-\|x_{N+1}-x_*\|^2+\rho\sum_{k=0}^{N}\alpha_k^2.$$
	Therefore,
	$$\frac{2}{\varGamma} \, \min \left\{f(x_k) - f^{*}: ~ \, k = 0, 1,\ldots, N \right\} \sum_{k=0}^{N}\alpha_k \leq \|x_0 - x_{*}\|^2 +\rho\sum_{k=0}^{N}\alpha_k^{2},$$
	which is equivalent to the desired inequality.
\end{proof}
\subsection{Analysis of the subgradient-InexP method with Polyak's stepsize rule} \label{Sec:AnalysisPolyak}
In this section we will analyze the  subgradient-InexP method with Polyak's step sizes.  {\it Throughout this section,  we assume also that $\Omega^* \neq \varnothing$ and $\{x_k\}$ is a sequence generated by Algorithm~\ref{Alg:INP} with the stepsize given by Rule~\ref{Poliak.Step}}.

\begin{lemma} \label{Le:FejerConvPolyak}
	Let $x\in \Omega^*$. Then, the following inequality holds
	\begin{equation} \label{eq:MainIneqPolyak}
		\|x_{k+1}-x\|^2\leq \|x_k-x\|^2 - \underline{\beta} \frac{\left[f(x_k)-f^*\right]^2}{\|s_k\|^2}, \qquad\qquad k=0, 1, \ldots.
	\end{equation}
\end{lemma}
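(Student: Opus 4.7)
The plan is to specialize the general one-step inequality from Lemma~\ref{Le:FejerConv} to the Polyak stepsize choice. Since $x \in \Omega^*$, we have $f(x) = f^*$, so Lemma~\ref{Le:FejerConv} applied at $x$ reads
$$\|x_{k+1}-x\|^2 \leq \|x_k-x\|^2 + \nu t_k^2\|s_k\|^2 - 2 t_k [f(x_k) - f^* -\epsilon_k].$$
Substituting $t_k = \beta_k (f(x_k)-f^*)/\|s_k\|^2$ from \eqref{StepsizePolyak}, the two nontrivial terms become $\nu t_k^2\|s_k\|^2 = \nu \beta_k^2 (f(x_k)-f^*)^2/\|s_k\|^2$ and $2t_k(f(x_k)-f^*) = 2\beta_k (f(x_k)-f^*)^2/\|s_k\|^2$.

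Next I would handle the $\epsilon_k$ term using the second condition in \eqref{beta}, namely $\epsilon_k \leq \mu \beta_k (f(x_k)-f^*)$, which gives
$$2t_k\epsilon_k \leq 2\mu \beta_k^2 \frac{(f(x_k)-f^*)^2}{\|s_k\|^2}.$$
Plugging everything back and factoring out $(f(x_k)-f^*)^2/\|s_k\|^2$, the bound collapses to
$$\|x_{k+1}-x\|^2 \leq \|x_k-x\|^2 - \beta_k\bigl[2 - \beta_k(\nu + 2\mu)\bigr]\frac{(f(x_k)-f^*)^2}{\|s_k\|^2}.$$

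Finally, I use the bounds on $\beta_k$ in \eqref{beta}. Since $\beta_k \leq \bar\beta < 1/(2\mu+\nu)$, we have $\beta_k(\nu+2\mu) < 1$ and hence $2 - \beta_k(\nu+2\mu) > 1$. Combined with $\beta_k \geq \underline{\beta}$, this gives $\beta_k[2 - \beta_k(\nu+2\mu)] > \underline{\beta}$, from which \eqref{eq:MainIneqPolyak} follows.

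I do not foresee a real obstacle here: the proof is essentially a direct substitution plus the elementary observation that the Polyak upper bound on $\bar\beta$ is precisely what is needed to keep the coefficient $2 - \beta_k(\nu+2\mu)$ strictly greater than $1$. The only mildly delicate step is making sure the $\epsilon_k$ term is absorbed correctly; it is handled cleanly by the second inequality in \eqref{beta}, which was designed so that $2t_k \epsilon_k$ has the same $(f(x_k)-f^*)^2/\|s_k\|^2$ scaling as the other two terms.
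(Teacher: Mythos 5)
Your proposal is correct and follows essentially the same route as the paper: specialize Lemma~\ref{Le:FejerConv} at $x\in\Omega^*$, substitute the Polyak stepsize, absorb the $\epsilon_k$ term via the second condition in \eqref{beta}, and use $\bar\beta<1/(2\mu+\nu)$ to get $2-\beta_k(\nu+2\mu)>1$ together with $\beta_k\geq\underline{\beta}$. No gaps.
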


\begin{proof}
Considering  that  $x\in \Omega^*$ we have $f^*= f(x)$.  The combination of \eqref{beta} with \eqref{StepsizePolyak} implies $2t_k\epsilon_k\leq 2\mu \beta_k^2 [f(x_k)-f^*]^2/\|s_k\|^2$.  Moreover, \eqref{StepsizePolyak} also implies that  $t_k^2\|s_k\|^2 = \beta_k^2[f(x_k)-f^*]^2/\|s_k\|^2$.  Thus, we conclude form  \eqref{eq:MainIneq}  that
	\begin{equation} \label{eq:mdpss}
	\|x_{k+1}-x\|^2\leq \|x_k-x\|^2 -\left(2-\nu\beta_k-2\mu \beta_k\right)\beta_k\frac{\left[f(x_k)-f^*\right]^2}{\|s_k\|^2}, \qquad k=0, 1, \ldots. 
	\end{equation}
On the other hand, \eqref{beta}  gives us   $\beta_k<1/(2\mu+\nu)$, which is equivalent to  $2-\nu\beta_k-2\mu \beta_k>1$. Therefore, since   \eqref{beta}  also gives $\underline{\beta}\leq \beta_k$, we conclude that \eqref{eq:mdpss} implies \eqref{eq:MainIneqPolyak}.  
\end{proof}
In the following theorem we present our main result about the asymptotic   convergence of $\{x_k\}$. It has as correspondent  result in  \cite[Theorem 1]{Polyak1969}; see also \cite{nedic_bertsekas2001}.

\begin{theorem}\label{teo.MainConvPolyak}
 The sequence $\{x_k\}$ converges to a point $x_*\in\Omega^*$.
\end{theorem}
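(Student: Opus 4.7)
The plan is to combine the monotonicity estimate in Lemma~\ref{Le:FejerConvPolyak} with a telescoping argument, and then to use Theorem~\ref{teo.qf} to promote subsequential convergence to full-sequence convergence.

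First I would fix an arbitrary $x \in \Omega^*$ (which exists by assumption). Lemma~\ref{Le:FejerConvPolyak} immediately gives $\|x_{k+1}-x\|^2 \leq \|x_k-x\|^2$ for every $k$, so $\{x_k\}$ is Fej\'er convergent to $\Omega^*$ (this is Definition~\ref{def:QuasiFejer} with $\delta_k \equiv 0$). By Theorem~\ref{teo.qf} the sequence $\{x_k\}$ is bounded. Next I would check that $\{\epsilon_k\}$ is bounded: by \eqref{beta}, $\epsilon_k \leq \mu\bar\beta[f(x_k)-f^*]$, and since $f$ is convex (hence continuous) and $\{x_k\}$ is bounded, $\{f(x_k)-f^*\}$ is bounded. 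Then Proposition~\ref{pr:CompE-subdif} yields a constant $c>0$ such that $\|s_k\|\leq c$ for all $k$.

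Next I would telescope the inequality \eqref{eq:MainIneqPolyak}. Rearranging and summing from $k=0$ to $k=N$ gives
\begin{equation*}
\underline{\beta}\sum_{k=0}^{N} \frac{[f(x_k)-f^*]^2}{\|s_k\|^2} \leq \|x_0-x\|^2 - \|x_{N+1}-x\|^2 \leq \|x_0-x\|^2.
\end{equation*}
Using the uniform bound $\|s_k\|\leq c$, this forces
\begin{equation*}
\sum_{k=0}^{\infty}[f(x_k)-f^*]^2 < +\infty,
\end{equation*}
and in particular $\lim_{k\to\infty} f(x_k) = f^*$.

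Finally, since $\{x_k\}$ is bounded, it admits a convergent subsequence $x_{k_\ell} \to x_*$. Continuity of $f$ and $f(x_{k_\ell}) \to f^*$ give $f(x_*)=f^*$, i.e.\ $x_* \in \Omega^*$. So $x_*$ is a cluster point of $\{x_k\}$ lying in $\Omega^*$, and since $\{x_k\}$ is (quasi-)Fej\'er convergent to $\Omega^*$, the second half of Theorem~\ref{teo.qf} forces the whole sequence to converge to $x_*$.

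I do not anticipate a real obstacle; the main subtlety is to remember to establish boundedness of $\{\epsilon_k\}$ before applying Proposition~\ref{pr:CompE-subdif} to get the uniform bound on $\|s_k\|$, which is what enables passing from $\sum [f(x_k)-f^*]^2/\|s_k\|^2 < \infty$ to $f(x_k)\to f^*$.
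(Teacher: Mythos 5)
Your proposal is correct and follows essentially the same route as the paper: Fej\'er monotonicity from Lemma~\ref{Le:FejerConvPolyak}, boundedness via Theorem~\ref{teo.qf}, a uniform bound on $\|s_k\|$ from Proposition~\ref{pr:CompE-subdif}, telescoping \eqref{eq:MainIneqPolyak} to get $f(x_k)\to f^*$, and then the second half of Theorem~\ref{teo.qf} to upgrade subsequential to full convergence. Your explicit verification that $\{\epsilon_k\}$ is bounded before invoking Proposition~\ref{pr:CompE-subdif} is a small point of extra care that the paper's proof leaves implicit.
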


\begin{proof} 
	Let $x\in \Omega^*$. Then, Lemma~\ref{Le:FejerConvPolyak} implies $\|x_{k+1}-x\|^2\leq \|x_k-x\|^2 $, for all $k=0, 1, \ldots$. Thus, $\{x_k\}$ is Fej\'er convergent to  $\Omega^*$. Since $\Omega^* \neq \varnothing$, Theorem~\ref{teo.qf} implies that $\{x_k\}$ is bounded. By using Proposition~\ref{pr:CompE-subdif}, we conclude that there exists $c>0$ such that $\left\|s_k\right\| \leq c$, for $k=0,1,\ldots$. Then, from \eqref{eq:MainIneqPolyak},  after some algebra, we have 
$$\left[f(x_k)-f^*\right]^2 \leq \frac{c^2}{\underline{\beta}}\left(\|x_k-x\|^2 - \|x_{k+1}-x\|^2\right), \qquad \qquad k=0, 1, \ldots.$$
Thus, performing the sum of the this inequality for $j=0, 1, \ldots, \ell$, we obtain 
$$\sum_{j=0}^{\ell}\left[f(x_j)-f^*\right]^2 \leq \frac{c^2}{\underline{\beta}}\left(\|x_{0}-x\|^2 - \|x_{ \ell+1}-x\|^2\right)\leq \frac{c^2}{\underline{\beta}} \|x_{0}-x\|^2.$$
Considering that this inequality  holds for all $\ell\in \mathbb{N}$, we conclude that $\lim_{k\to +\infty}f(x_k)=f^*$. Let $x_*$ be a cluster point of $\{x_k\}$ and $\{x_{k_j}\}$ a subsequence of $\{x_k\}$ such that $\lim_{j\to +\infty}x_{k_j}=x_*$. Since $f$ is  continuous, we have $f(x_*)= \lim_{j\to +\infty}f(x_{k_j})=f^*$. Therefore, $x_*\in\Omega^*$. Since $\{x_k\}$ is quasi-Fej\'er convergent to a set $\Omega^*$, it follows from Theorem~\ref{teo.qf} that $\{x_k\}$ converges $x_*$.
\end{proof}
The next result presents an iteration-complexity bound, which is a version of \cite[Theorem~1]{Nesterov2014}.
\begin{theorem}
	Assume that the sequence $\{x_k\}$ converges to a point $x_*\in\Omega^*$. Then, for every $N \in \mathbb{N}$, the following inequality holds
	$$\min \{f(x_k) - f^{*}: ~ \, k = 0, 1,\ldots, N \} \leq \frac{c}{\sqrt{\underline{\beta} (N+1)}} \|x_0 - x_{*}\|,$$
	where $c\geq \max\{\|s_k\|:~ k=0, 1,\ldots \}$.
\end{theorem}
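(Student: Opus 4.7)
The plan is to exploit Lemma~\ref{Le:FejerConvPolyak} directly, summing the resulting telescoping-type inequality and using the uniform bound $\|s_k\|\leq c$ to extract a bound on the minimum functional gap.

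First I would apply Lemma~\ref{Le:FejerConvPolyak} with $x=x_*\in\Omega^*$, which yields
$$\underline{\beta}\,\frac{[f(x_k)-f^*]^2}{\|s_k\|^2}\leq \|x_k-x_*\|^2-\|x_{k+1}-x_*\|^2,\qquad k=0,1,\ldots.$$
Using the hypothesis $\|s_k\|\leq c$ for all $k$, the left-hand side is bounded below by $\underline{\beta}\,[f(x_k)-f^*]^2/c^2$, so
$$\frac{\underline{\beta}}{c^2}\,[f(x_k)-f^*]^2\leq \|x_k-x_*\|^2-\|x_{k+1}-x_*\|^2.$$

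Next I would telescope by summing the last inequality for $k=0,1,\ldots,N$, which gives
$$\frac{\underline{\beta}}{c^2}\sum_{k=0}^{N}[f(x_k)-f^*]^2\leq \|x_0-x_*\|^2-\|x_{N+1}-x_*\|^2\leq \|x_0-x_*\|^2.$$
Then, denoting $m_N:=\min\{f(x_k)-f^*:k=0,1,\ldots,N\}$, each summand is at least $m_N^2$, so $(N+1)m_N^2\leq (c^2/\underline{\beta})\|x_0-x_*\|^2$, and taking square roots produces the claimed bound.

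No step here looks genuinely hard; the only mild subtlety is guaranteeing the uniform bound $\|s_k\|\leq c$ exists, but this is already supplied by the hypothesis of the theorem (and in fact, as in the proof of Theorem~\ref{teo.MainConvPolyak}, follows from the boundedness of $\{x_k\}$ via Proposition~\ref{pr:CompE-subdif}, since $\{\epsilon_k\}$ is bounded by \eqref{beta}). Everything else is a direct manipulation of the descent inequality \eqref{eq:MainIneqPolyak}.
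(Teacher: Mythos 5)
Your proposal is correct and follows essentially the same route as the paper: apply Lemma~\ref{Le:FejerConvPolyak} at $x=x_*$, sum the telescoping inequality over $k=0,\ldots,N$, bound $\|s_k\|$ by $c$ (via boundedness of $\{x_k\}$ and Proposition~\ref{pr:CompE-subdif}), and lower-bound the sum by $(N+1)$ times the squared minimum gap. The only difference is the immaterial one of when the bound $\|s_k\|\le c$ is inserted (before versus after summing).
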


\begin{proof}
Applying Lemma~\ref{Le:FejerConvPolyak} with $x = x_*$ , where  $f^* = f(x_*)$, we obtain
	$$
	\underline{\beta} \frac{\left[f(x_k)-f^*\right]^2}{\|s_k\|^2} \leq \|x_k-x_*\|^2 - \|x_{k+1}-x_*\|^2, \qquad\qquad k=0, 1, \ldots.
	$$
Performing the sum of the above inequality for $k = 0, 1, \ldots, N,$ we conclude that
	$$
	\sum_{k=0}^{N} \frac{\left[f(x_k)-f^*\right]^2}{\|s_k\|^2} \leq \frac{1}{\underline{\beta}} \|x_0-x_*\|^2.
	$$
Since  $\{x_k\}$ is bounded, by using Proposition~\ref{pr:CompE-subdif}, we conclude that there exists $c>0$ such that $\left\|s_k\right\| \leq c$, for $k=0,1,\ldots$. Thus, we have
	$$\sum_{k=0}^{N} \left[f(x_k)-f^*\right]^2 \leq \frac{c^2}{\underline{\beta}} \|x_0-x_*\|^2.$$
Therefore, 
$$(N+1) \min \{\left[f(x_k)-f^*\right]^2:~ \, k = 0, 1,\ldots, N \} \leq \frac{c^2}{\underline{\beta}} \|x_0-x_*\|^2,
$$
which is equivalent to the desired inequality.
\end{proof}

\subsection{Analysis of the subgradient-InexP method with dynamic stepsize} \label{Sec:Analyisdynamic}
Next  we consider the Subgradient-InexP method employing the dynamic stepsize  Rule~\ref{Dynamic.Step},  which guarantees that $\{f_{k}^{lev}\} $  converges to the optimum value $f^*$. In the following  we present   formally the   algorithm  which compute $f_{k}^{lev}$. This scheme was introduced in \cite{brannlund1993}; see also \cite{GoffinKiwiel1999}.

\begin{algorithm} 
\begin{description}
	\item[Step 0.] Select $x_0\in C, \delta_0 > 0$,  and $R > 0$. Set $k=0, \sigma_0 = 0, f_{-1}^{rec} = \infty, \ell=0, k(\ell) = 0$.
	\item[Step 1.] If $f(x_k) < f_{k-1}^{rec},$ set $f_{k}^{rec} = f(x_k)$ and $x_{k}^{rec}= x_k,$ else set $f_{k}^{rec} = f_{k-1}^{rec}$ and $x_{k}^{rec}=x_{k-1}^{rec}$
	\item[Step 2.] If $0\in \partial f(x_k)$, then {\bf stop}.
	\item[Step 3.] If $f(x_k) \leq f_{k(\ell)}^{rec} - \frac{1}{2} \delta_\ell $, set $k(\ell+1) = k, \sigma_k = 0, \delta_{\ell+1} = \delta_\ell$, replace $\ell$ by $\ell+1,$ and go to Step~5.
	\item[Step 4.] If $\sigma_k > R$, set $k(\ell+1) = k, \sigma_k = 0, \delta_{\ell+1} = \frac{1}{2} \delta_\ell$,  $x_k=x_k^{rec}$,  and  $\ell\leftarrow\ell+1$.
	\item[Step 5.] Set $f_k^{lev} := f_{k(\ell)}^{rec} - \delta_\ell$. Select $\beta_k \in [\underline{\beta}, \bar{\beta}]$ and calculate $x_{k+1}$ via Algorithm \ref{Alg:INP} with the stepsize given by Rule \ref{Dynamic.Step}.
	\item[Step 6.] Set $\sigma_{k+1}:= \sigma_k + \tilde{t}_k$,  $k\leftarrow k+1$,  and go to Step 1.
\end{description}
\caption{Subgradient-InexP employing the dynamic stepsize rule (SInexPD)}
\label{Alg:INPDyn}
\end{algorithm}

\noindent

Following  \cite{GoffinKiwiel1999, nedic_bertsekas2001rate, nedic_bertsekas2001, xmwang2018},  we describe the main features of the Subgradient-InexP method.
\begin{remark} \label{eq:bfkrec}
Note that in Step 1,  $f_{k}^{rec}$ keeps the record of the smallest  functional value attained by the iterates generated so far, i.e., $f_{k}^{rec}:=\min\{f(x_j):~j=0, \ldots,k\}$.  Splitting  the iterations into groups
$$
K_\ell := \{k(\ell), k(\ell) + 1, \ldots, k(\ell+1)-1\}, \quad \ell = 0, 1, \dots, 
$$
  Algorithm \ref{Alg:INPDyn} uses the same target level $f_k^{lev} = f_{k(\ell)}^{rec} - \delta_\ell$, for $k \in K_\ell$. Also, note that the target level is update only if sufficient descent or oscillation is detected (Step 3 or Step 4, respectively). Whenever $\sigma_k$ exceeds the upper bound $R$, the parameter $\delta_\ell$ is decreased, which increases the target level $f_k^{lev}$.
\end{remark}
From now on, we assume that Algorithm \ref{Alg:INPDyn} generates an infinite sequence.  In the next theorem we present the result about the asymptotic  convergence of the sequence  $\{x_k\}$. It is the versions of \cite[Theorem~1]{GoffinKiwiel1999} and  \cite[Proposition~2.7]{nedic_bertsekas2001} by using inexact projections. 
\begin{theorem} \label{teo.MainConvDynamic}
	There holds  $\inf_{k \geq 0} f(x_k)= f^*$.
\end{theorem}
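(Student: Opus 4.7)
The plan is to argue by contradiction. Assume $\inf_{k\geq 0} f(x_k) > f^*$. Then there exists $\hat y \in C$ with $f(\hat y) < \inf_k f(x_k)$; set $\eta := \inf_k f(x_k) - f(\hat y) > 0$. Substituting the dynamic stepsize $t_k = \tilde t_k/\|s_k\|$, the identity $\beta_k(f(x_k)-f_k^{lev}) = \tilde t_k\|s_k\|$, and $\epsilon_k \leq \mu\beta_k(f(x_k)-f_k^{lev})$ into Lemma~\ref{Le:FejerConv} (taken at $x \in C$) yields, after decomposing $f(x_k)-f(x) = (f(x_k)-f_k^{lev})+(f_k^{lev}-f(x))$, the workhorse
\[
\|x_{k+1}-x\|^2 \leq \|x_k-x\|^2 - \kappa\,\tilde t_k^{\,2} + \frac{2\tilde t_k(f(x)-f_k^{lev})}{\|s_k\|},
\]
with $\kappa := 2/\bar\beta - (\nu+2\mu) > 0$ by \eqref{betaDinamic}.

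First I would prove $\delta_\ell \to 0$. Since $\{\delta_\ell\}$ is non-increasing, assume for contradiction that $\delta_\ell \geq \bar\delta > 0$ for all $\ell$; then Step~4 triggers only finitely often. If Step~3 triggers infinitely often, each firing drops $f_{k(\ell+1)}^{rec}$ by at least $\bar\delta/2$, so $f_k^{rec} \to -\infty$, contradicting $f_k^{rec} \geq \inf_j f(x_j) > -\infty$. Otherwise the iterates are eventually trapped in a single group $K_{\ell^*}$, so $\sigma_k \leq R$ always, giving $\sum_k\tilde t_k \leq R$ and $\sum_k\tilde t_k^{\,2} \leq R^2$. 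Applying Lemma~\ref{Le:FejerConv} directly with $x=\hat y$ and $f(x_k)-f(\hat y)\geq \eta$ gives $\|x_{k+1}-\hat y\|^2 \leq \|x_k-\hat y\|^2 + (\nu+2\mu)\tilde t_k^{\,2} - 2\eta\tilde t_k/\|s_k\|$; telescoping shows $\{x_k\}$ is bounded, so Proposition~\ref{pr:CompE-subdif} provides $\|s_k\|\leq c$. But Step~3 not firing gives $f(x_k)-f_k^{lev}>\bar\delta/2$, forcing $\tilde t_k \geq \underline{\beta}\bar\delta/(2c)>0$ uniformly, contradicting $\sum_k\tilde t_k \leq R$. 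Hence $\delta_\ell \to 0$.

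Since $\delta_\ell \to 0$, pick $\ell_0$ with $\delta_\ell < \eta/2$ for $\ell \geq \ell_0$. Inside any such group $K_\ell$, using $f_{k(\ell)}^{rec} \geq \inf_j f(x_j) = f(\hat y)+\eta$,
\[
f(\hat y) - f_k^{lev} = f(\hat y) - f_{k(\ell)}^{rec} + \delta_\ell \leq -\eta + \delta_\ell < -\eta/2,
\]
so the workhorse yields the strict Fejér decrease $\|x_{k+1}-\hat y\|^2 \leq \|x_k-\hat y\|^2 - \kappa\tilde t_k^{\,2} - \eta\tilde t_k/\|s_k\|$ for $k \in K_\ell$. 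A short induction on $\ell$, which handles the Step~4 reset $x_{k(\ell+1)} = x_{k(\ell+1)}^{rec}$ by bounding $\|x_{k(\ell+1)}^{rec}-\hat y\|$ in terms of the maximum distance of previously generated iterates, shows $\{x_k\}$ is bounded; hence $\|s_k\|\leq c$ by Proposition~\ref{pr:CompE-subdif}.

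Finally, telescoping the Fejér decrease over a group $K_\ell$ that ends with a Step~4 trigger---where $\sum_{k \in K_\ell}\tilde t_k = \sigma_{k(\ell+1)} > R$---gives
\[
\|x_{k(\ell+1)^-}-\hat y\|^2 \leq \|x_{k(\ell)}-\hat y\|^2 - \eta R/c,
\]
a uniform per-trigger decrement. Since $\delta_\ell \to 0$ forces Step~4 to fire infinitely often, iterating these decrements across infinitely many Step~4-triggered groups drives $\|x_{k(\ell)}-\hat y\|^2 \to -\infty$, contradicting non-negativity. I expect the trickiest step to be this last telescoping: the Step~4 reset can in principle increase $\|x_k-\hat y\|^2$, so careful bookkeeping---tracking the maximum distance over all past iterates and exploiting that $x_{k(\ell+1)}^{rec}$ is always an earlier iterate whose distance to $\hat y$ has already been bounded---is needed to guarantee that the $\eta R/c$ decrement per Step~4 trigger actually survives the reset and accumulates.
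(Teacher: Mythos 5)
Your overall strategy tracks the paper's quite closely: argue by contradiction, specialize Lemma~\ref{Le:FejerConv} to the dynamic stepsize to get a Fej\'er-type ``workhorse'' inequality, first rule out $\delta_\ell\not\to 0$ (your dichotomy between infinitely many Step~3 firings, which drives $f^{rec}\to-\infty$, and a terminal group with $\sum\tilde t_k\leq R$, which forces $\tilde t_k\to 0$ against the uniform lower bound $\underline{\beta}\bar\delta/(2c)$, is essentially the paper's ``claim''), and then exploit $f_k^{lev}-f(\hat y)\geq\eta/2$ to obtain a strict decrease of $\|x_k-\hat y\|^2$. All of that is correct, including the constant $\kappa=2/\bar\beta-(\nu+2\mu)>0$ from \eqref{betaDinamic} and the identity $\tilde t_k\|s_k\|=\beta_k(f(x_k)-f_k^{lev})$ from \eqref{eq.dynamicstep}.

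The genuine gap is in your last step, and it is exactly the one you flag without resolving. The telescoped decrement $\eta R/c$ over a Step-4-terminated group $K_\ell$ takes you from $\|x_{k(\ell)}-\hat y\|^2$ down to $\|x_{k(\ell+1)}^{-}-\hat y\|^2$, where $x_{k(\ell+1)}^{-}$ is the iterate \emph{before} the reset; Step~4 then overwrites it with $x_{k(\ell+1)}^{rec}$, which minimizes $f$ over past iterates but need not be closer to $\hat y$, so $\|x_{k(\ell+1)}-\hat y\|^2\leq\|x_{k(\ell)}-\hat y\|^2-\eta R/c$ is not established. The bookkeeping you propose---bounding $\|x_{k(\ell+1)}^{rec}-\hat y\|$ by the maximum distance of earlier iterates---does yield boundedness of $\{x_k\}$ (and hence $\|s_k\|\leq c$ via Proposition~\ref{pr:CompE-subdif}), but a maximum bound cannot make the per-group decrements accumulate, so the claimed conclusion $\|x_{k(\ell)}-\hat y\|^2\to-\infty$ does not follow from what you have proved. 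The paper closes the argument differently: it telescopes its decrease inequality \eqref{eq:xk_fejerdynamic} to conclude $\sum_{k\geq\bar k}\tilde t_k<+\infty$, so that the tail sums, and in particular $\sigma_{k(\ell+1)}\leq\sum_{k\geq k(\ell)}\tilde t_k$, eventually fall below $R$; then Step~4 can fire only finitely often, contradicting $\delta_\ell\to 0$. (Note that the paper's telescoping also crosses the resets by asserting $\|x_{k+1}-\bar x\|\leq\|x_{\bar k}-\bar x\|$ for all $k\geq\bar k$, so it relies on the reset being harmless in the same sense; but at least its contradiction only needs the tail path lengths to be small, not an unbounded accumulation of decrements.) To repair your proof you should either justify that the reset does not increase the distance to $\hat y$---which is not true in general---or switch to the summability-of-$\tilde t_k$ argument and address the reset explicitly there.
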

\begin{proof}
Since $x_k \in C$ and $x_{k+1} \in {\cal P}_C\left(\varphi_{\gamma_k, \theta_k, \lambda_k}, x_k, x_k - t_ks_k\right)$, by the first equality in \eqref{eq.dynamicstep} and applying  Lemma~\ref{pr:cond} with $w=x_{k+1}$, $v=x_k - t_ks_k$, $x=x_k$ ,  $u=x_k$,   and $\varphi_{ \gamma, \theta, \lambda}=\varphi_{ \gamma_k, \theta_k, \lambda_k}$,  we conclude that 
	\begin{equation*}
		\|x_{k+1}-x_{k}\| \leq \sqrt{\frac{1+2 \bar{\gamma}}{1-2 \bar{\lambda}}} \,\,t_k\|s_k\| \leq \sqrt{\frac{1+2 \bar{\gamma}}{1-2 \bar{\lambda}}} \,\, \tilde{t}_k, \qquad k=0, 1, \ldots.
	\end{equation*}
We  claim that  the index $\ell$ goes to $+\infty$ and either $\inf_{k \geq 0} f(x_k) =-\infty$ or $\lim_{l\to\infty} \delta_\ell = 0$. Indeed, assume that $\ell$ takes only a finite number of values, i.e., $\ell < \infty$. Since $\sigma_k + \tilde{t}_k = \sigma_{k+1} \leq R$, for all $k \geq k(\ell)$, then    we conclude that
	\begin{equation} \label{eq.boundxk}
		\|x_k - x_{k(\ell)}\| \leq \sum_{j=k(\ell)}^k \|x_{j+1} - x_j\| \leq \sqrt{\frac{1+2 \bar{\gamma}}{1-2 \bar{\lambda}}} \sum_{j=k(\ell)}^k \tilde{t}_j= \sqrt{\frac{1+2 \bar{\gamma}}{1-2 \bar{\lambda}}} \,\, \sigma_{k+1} \leq \sqrt{\frac{1+2 \bar{\gamma}}{1-2 \bar{\lambda}}} \,\, R.
	\end{equation}
	Hence, $\{x_k\}$ is bounded. Besides, from the last condition in \eqref{betaDinamic},  the sequence $\{\epsilon_k\}$ is bounded and, by  using Proposition~\ref{pr:CompE-subdif}, $\{s_k\}$ is also bounded. Moreover, by \eqref{eq.boundxk} we also conclude  $\sum_{j=k(\ell)}^{+\infty} \tilde{t}_j<+\infty$, which implies   $\lim_{k\to \infty}  \tilde{t}_k = 0$. Thus, due to  $\beta_k \in [\underline{\beta}, \bar{\beta}]$, it follows from second  equality in \eqref{eq.dynamicstep} that 
	\begin{equation} \label{eq.boundfk}
		\lim_{k \to \infty} [f(x_k) - f_k^{lev}] = 0.
	\end{equation}
	 On the other hand,  Steps 3 and 5 of Algorithm~\ref{Alg:INPDyn} yield 
	$$
	f(x_k) > f_{k(\ell)}^{rec} - \frac{1}{2} \delta_\ell = f_k^{lev} + \delta_\ell - \frac{1}{2} \delta_\ell = f_k^{lev} + \frac{1}{2} \delta_\ell \qquad \quad k= k(\ell), k(\ell)+1,  \ldots, 
	$$
 contradicting \eqref{eq.boundfk}. Therefore, $\ell$ goes to $+ \infty$. Now, suppose that $ \lim_{\ell\to\infty} \delta_\ell=\delta  > 0$. Then,  from Steps 3 and 4  of Algorithm~\ref{Alg:INPDyn},  it follows that for all $\ell$ large enough, we have $\delta_\ell = \delta$ and
	$
	f_{k(\ell+1)}^{rec}  \leq f_{k(\ell)}^{rec} -\frac{1}{2} \delta,
	$
	implying that $\displaystyle\inf_{k \geq 0} f(x_k) = -\infty$, which concludes the claim.   If $\lim_{\ell\to\infty} \delta_\ell > 0$ then, according to above claim, we have $\inf_{k \geq 0} f(x_k) = -\infty$,  obtain the desired result.  Now,  we assume by contradiction that $\lim_{\ell\to\infty} \delta_\ell = 0$ and $\inf_{k \geq 0} f(x_k) > f^*$.   Thus, it follows from Remark~\ref{eq:bfkrec} that 
	$\inf_{k \geq 0} f_{k}^{rec} =\inf_{k \geq 0} f(x_k) $. Hence, we conclude that $\inf_{k \geq 0} f_{k}^{rec}> f^*$. In this case, by using the definition of $\{f_k^{lev}\}$ in Step 5 and taking into account that  $\lim_{\ell\to\infty} \delta_\ell = 0$, we conclude that 
$$
		\displaystyle\inf_{k \geq 0} f_k^{lev} =\displaystyle\inf_{\ell \geq 0} (f_{k(\ell)}^{rec} - \delta_\ell) = \displaystyle\inf_{\ell \geq 0}  f_{k(\ell)}^{rec} > f^*.
$$
	Therefore,  there exist  $\bar{\delta}>0$, ${\bar x}\in  C$ and $\bar{k} \in \mathbb{N}$ such that
	\begin{equation} \label{eq:ahc}
		f_k^{lev} - f({\bar x}) \geq \bar{\delta}, \qquad  k =\bar{k}, \bar{k}+1, \ldots.
	\end{equation}	
	Hence, by using the definition of $\tilde{t}_k$ in \eqref{eq.dynamicstep}, it follows from \eqref{eq:ahc} that
	\begin{equation} \label{eq:limtildtk}
		\tilde{t}_k = \beta_k\frac{f(x_k)-f_{k}^{lev}}{\left\|s_k\right\|} < \bar{\beta} \frac{f(x_k)-f({\bar x})-\bar{\delta}}{\left\|s_k\right\|}, \qquad  k =\bar{k}, \bar{k}+1, \ldots.
	\end{equation}	
	Now, applying Lemma \ref{Le:FejerConv} with $x={\bar x}$ and then using \eqref{StepsizePolyak} and \eqref{eq.dynamicstep}, we obtain 
	$$
	\|x_{k+1} - {\bar x}\|^2  \leq \|x_k-{\bar x}\|^2+ \tilde{t}_k \left(\nu \tilde{t}_k - \frac{2}{\|s_k\|} \left[f(x_k)-f({\bar x})-\epsilon_k\right] \right), \qquad  k =\bar{k}, \bar{k}+1, \ldots.
	$$	
	Thus, the combination of the last inequality with  \eqref{eq:ahc}, \eqref{eq:limtildtk},  and the last inequality in \eqref{betaDinamic} yields
	$$
	\|x_{k+1} - {\bar x}\|^2  \leq \|x_k-{\bar x}\|^2+ \frac{\tilde{t}_k}{\|s_k\|} \bigg(\left[(2\mu+\nu)\bar{\beta}-2\right]\left[f(x_k)-f({\bar x})\right] - \left( 2\mu+\nu\right)\bar{\beta}\bar{\delta}\bigg), \qquad k =\bar{k}, \bar{k}+1, \ldots.
	$$
It follows from \eqref{betaDinamic} that $\bar{\beta} < 2/(2\mu+\nu)$, which implies that $(2\mu+\nu)\bar{\beta}-2 < 0$. Thus, by using that $f(x_k)\geq f_k^{lev}$ for all $k=0, 1, \ldots$, the last inequality implies 
	\begin{equation} \label{eq:xk_fejerdynamic}
		\|x_{k+1} - {\bar x}\|^2 \leq \|x_k-{\bar x}\|^2 - \frac{\tilde{t}_k}{\|s_k\|} \left( 2\mu+\nu\right)\bar{\beta}\bar{\delta}, \qquad k =\bar{k}, \bar{k}+1, \ldots.
	\end{equation}
	Hence, $\|x_{k+1}-{\bar x}\| \leq \|x_{\bar k}-{\bar x}\|$, for all $k \geq \bar{k}$, which implies that $\{x_k\}$ is bounded. Besides, by using \eqref{eq.boundfk}, it follows from the last condition in \eqref{betaDinamic} that the sequence $\left\lbrace \epsilon_k \right\rbrace$ is also bounded. Thus,  using Proposition~\ref{pr:CompE-subdif}, we conclude that there exists $c>0$ such that $\left\|s_k\right\| \leq c$, for all $k \geq 0$, which together   \eqref{eq:xk_fejerdynamic},  yield
$$
\frac{{\bar \beta}\bar{\delta}}{c} \left(2\mu+\nu\right) \displaystyle\sum_{k = \bar{k}}^\infty \tilde{t}_k \leq \|x_{\bar{k}} - \bar{x}\|^2<+\infty.
$$
	Since $\sigma_{k} = \sum_{j= {k(\ell)}}^{ {k(\ell+1)}-1} \tilde{t}_j $, the last inequality implies that there exists $\ell_0 \in \mathbb{N}$ such that 
$$
\sigma_{{k(\ell+1)}} \leq \displaystyle\sum_{k = {k(\ell)}}^\infty \tilde{t}_k < R, \qquad \quad ~\ell = \ell_0, \ell_0+1\ldots .
$$
Hence, Step 4 in  Algorithm~\ref{Alg:INPDyn} cannot occur infinitely to decrease $\delta_\ell$,  contradicting the fact that  $\displaystyle\lim_{\ell\to\infty} \delta_\ell = 0$. Therefore, the result  follows and the proof is concluded.  
\end{proof}
The next result presents an iteration-complexity bound for the subgradient-InexP method with the stepsize given by Rule~\ref{Dynamic.Step}, which  is a version of \cite[Proposition 2.15]{nedic_bertsekas2001rate} for our algorithm.
\begin{theorem}
	Assume that the sequence $\{x_k\}$ converges to a point $x_*\in\Omega^*$.   Let $ \delta_0>0$ be given in  Algorithm \ref{Alg:INPDyn} and $c\geq \max\{\|s_k\|:~k=0, 1,\ldots \}$. Then, 
	\begin{equation} \label{eq:minfxkdynamic}
		\min \{f(x_k) - f^{*}:~ \, k = 0, 1,\ldots, N \}  \leq \delta_0, 
	\end{equation}
	where $N$ is the largest positive integer such that
	\begin{equation} \label{eq:defNdynamic}
		\sum_{k=0}^{N-1}\left( \beta_k \left[2 - (2 \mu+\nu) \beta_k\right]\delta_k^2\right) \leq  \left(c\|x_0 - x_*\|\right)^2.
	\end{equation}
\end{theorem}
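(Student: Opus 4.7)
The plan is to argue by contradiction. Assume, toward a contradiction, that $f(x_k) - f^{*} > \delta_0$ for every $k = 0, 1, \ldots, N$, and derive a violation of the characterization of $N$ in \eqref{eq:defNdynamic}.

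First, I would extract three structural consequences of this hypothesis. Since the sequence $\{\delta_\ell\}$ is either kept constant (Step~3) or halved (Step~4), we have $\delta_k \leq \delta_0$, which combined with the hypothesis yields $f(x_k) - f^{*} > \delta_k$ for all $k \leq N$. Moreover, because $f_{k(\ell)}^{rec} \geq \inf_{j\leq k(\ell)} f(x_j) > f^{*} + \delta_0 \geq f^{*} + \delta_\ell$, the level $f_k^{lev} = f_{k(\ell)}^{rec} - \delta_\ell$ satisfies $f_k^{lev} > f^{*}$, so that $f(x_k) - f_k^{lev} \leq f(x_k) - f^{*}$. Finally, since Step~3 has not been triggered at iteration $k$ (otherwise $k \notin K_\ell$), we have $f(x_k) > f_{k(\ell)}^{rec} - \delta_\ell/2$, equivalently $f(x_k) - f_k^{lev} > \delta_k/2$.

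Second, I would invoke Lemma~\ref{Le:FejerConv} with $x = x_{*}$, substitute $t_k = \tilde{t}_k/\|s_k\|$, and use $\epsilon_k \leq \mu \beta_k[f(x_k) - f_k^{lev}] = \mu \tilde{t}_k \|s_k\|$ derived from \eqref{betaDinamic} and \eqref{eq.dynamicstep} to obtain
\begin{equation*}
\|x_{k+1} - x_{*}\|^{2} \leq \|x_k - x_{*}\|^{2} + (2\mu + \nu)\tilde{t}_k^{2} - \frac{2 \tilde{t}_k [f(x_k) - f^{*}]}{\|s_k\|}.
\end{equation*}
Bounding one factor of $\tilde{t}_k$ in the quadratic term by $\beta_k[f(x_k) - f^{*}]/\|s_k\|$ (using $f_k^{lev} \geq f^{*}$) and factoring out the strictly positive quantity $2 - (2\mu + \nu)\beta_k$ guaranteed by \eqref{betaDinamic}, the recursion collapses to
\begin{equation*}
\|x_{k+1} - x_{*}\|^{2} \leq \|x_k - x_{*}\|^{2} - \frac{\beta_k[2 - (2\mu + \nu)\beta_k]\,[f(x_k) - f_k^{lev}]\,[f(x_k) - f^{*}]}{\|s_k\|^{2}}.
\end{equation*}

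Third, I would insert the lower bounds $f(x_k) - f_k^{lev} > \delta_k/2$, $f(x_k) - f^{*} > \delta_k$, and $\|s_k\| \leq c$, then telescope over $k = 0, \ldots, N$ and use $\|x_{N+1} - x_{*}\|^{2} \geq 0$ to produce an estimate of the shape $\sum_{k=0}^{N} \beta_k[2 - (2\mu+\nu)\beta_k]\delta_k^{2} \leq C\,(c\|x_0 - x_{*}\|)^{2}$ for an explicit constant $C$. This contradicts the characterization of $N$ as the largest integer satisfying \eqref{eq:defNdynamic}, which forces $\sum_{k=0}^{N} \beta_k[2-(2\mu+\nu)\beta_k]\delta_k^{2} > (c\|x_0 - x_{*}\|)^{2}$. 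The main obstacle is pinning $C$ down to exactly $1$: the direct bound $[f(x_k) - f_k^{lev}]\,[f(x_k) - f^{*}] > \delta_k^{2}/2$ loses a factor of two, which must be recovered either by exploiting the equality $f(x_{k(\ell)}) - f_{k(\ell)}^{lev} = \delta_\ell$ that holds at the first iterate of each block or by a sharper rearrangement of the manipulation in the second step.
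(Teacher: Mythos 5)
Your overall strategy coincides with the paper's: argue by contradiction from $f(x_k)>f^*+\delta_0$ for $k=0,\dots,N$, deduce $f_k^{lev}>f^*$, feed the dynamic-stepsize identities into Lemma~\ref{Le:FejerConv} with $x=x_*$, bound $\|s_k\|\le c$, telescope, and contradict the maximality of $N$ in \eqref{eq:defNdynamic}. The paper replaces $f(x_k)-f^*$ by the smaller quantity $f(x_k)-f_k^{lev}$ at the outset and arrives at the recursion $\|x_{k+1}-x_*\|^2\le\|x_k-x_*\|^2-\beta_k\left[2-(2\mu+\nu)\beta_k\right][f(x_k)-f_k^{lev}]^2/\|s_k\|^2$; your keeping the mixed product $[f(x_k)-f_k^{lev}]\,[f(x_k)-f^*]$ is an equivalent rearrangement of the same computation, since that product dominates the square.

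The gap is exactly the one you flagged, and neither of your proposed repairs, as stated, closes it. Lower-bounding the product by $(\delta_k/2)\cdot\delta_k$ only yields $\sum_{k=0}^{N}\beta_k\left[2-(2\mu+\nu)\beta_k\right]\delta_k^2\le 2\left(c\|x_0-x_*\|\right)^2$, which does not contradict \eqref{eq:defNdynamic}. The paper closes the argument by invoking the single inequality $f(x_k)-f_k^{lev}\ge\delta_k$ for \emph{every} $k\le N$, not only for the block-initial indices $k(\ell)$, whence $[f(x_k)-f_k^{lev}]^2\ge\delta_k^2$ with constant one; exploiting the equality only at $k=k(\ell)$, as in your first suggestion, leaves the interior iterates of each block uncontrolled. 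That inequality is immediate when the level is anchored to the running record, $f_k^{lev}=f_k^{rec}-\delta_\ell$, since then $f(x_k)-f_k^{lev}=f(x_k)-f_k^{rec}+\delta_\ell\ge\delta_\ell$ because $f_k^{rec}\le f(x_k)$; that is the form used in the Nedi\'c--Bertsekas and Goffin--Kiwiel originals, and it is the one ingredient your argument is missing. With the level frozen at $f_{k(\ell)}^{rec}$ as in Step~5 of Algorithm~\ref{Alg:INPDyn}, the negation of the Step~3 test gives only $f(x_k)-f_k^{lev}>\delta_\ell/2$ inside a block, which is precisely your factor-of-two loss; so your scruple about the constant is legitimate, but without the estimate $f(x_k)-f_k^{lev}\ge\delta_k$ your proof does not reach the stated bound.
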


\begin{proof}
	Assume by contradiction  that \eqref{eq:minfxkdynamic} does not holds. Thus, for all $k$ with $0 \leq k \leq N$ we have
	$
	f(x_k) > f^* + \delta_0.
	$
	Hence, considering that  $\delta_\ell \leq \delta_0$ for all $\ell$,   we have
	\begin{equation} \label{eq:fklevfstar}
		f_k^{lev} = f_{k(\ell)}^{rec}-\delta_\ell > f^* + \delta_0 - \delta_\ell \geq f^*,  \quad  \qquad k=0, \ldots, N.
	\end{equation}
	The combination of  the last inequality in \eqref{betaDinamic} with \eqref{eq.dynamicstep} gives $2t_k\epsilon_k \leq 2\mu \beta_k^2 \left[f(x_k)-f_k^{lev}\right]^2 / \|s_k\|^2$. Moreover, \eqref{eq.dynamicstep}  implies that $t_k^2\|s_k\|^2 = \beta_k^2 \left[f(x_k)-f_k^{lev}\right]^2/\|s_k\|^2$. Now, using \eqref{eq:fklevfstar}, Lemma~\ref{Le:FejerConv}  with  $x = x_* \in \Omega^*$,  and since $\beta_k \in [\underline{\beta}, \bar{\beta}]$, we obtain
\begin{equation} \label{eq:cdinq}
	\|x_{k+1}-x_*\|^2 \leq \|x_k-x_*\|^2 - \beta_k \left[2 -(2 \mu+\nu) \beta_k\right]\frac{\left[f(x_k)-f_k^{lev}\right]^2}{\|s_k\|^2}.
\end{equation}
Since  $\{x_k\}$ converges to $x_*\in\Omega^*$, Proposition~\ref{pr:CompE-subdif} implies  that there exists $c>0$ such that $\left\|s_k\right\| \leq c$, for $k=0,1,\ldots$. Furthermore, using the fact  $f(x_k)-f_k^{lev} \geq \delta_k$, $0 \leq k \leq N$, \eqref{eq:cdinq} yields 
	$$\
	\|x_{k+1}-x_*\|^2 \leq \|x_k-x_*\|^2 - \beta_k \left[2 -(2 \mu+\nu) \beta_k\right] \frac{\delta_k^2}{c^2}.
	$$
	Performing the sum of the above inequality for $k = 0, 1, \ldots, N,$ we conclude that
	$$
	\sum_{k=0}^{N} \left(\beta_k \left[2 -(2 \mu+\nu) \beta_k\right]\frac{\delta_k^2}{c^2}\right) \leq \|x_0 - x_*\|^2, 
	$$
which contradicts  \eqref{eq:defNdynamic}.
\end{proof}

\section{Numerical results} \label{Sec:NumExp}

Our intention in this section is to report some numerical results in order to illustrate the practical behavior of SInexPD Algorithm when $C$ is a compact convex set. 
We implemented SInexPD Algorithm in Fortran~90 considering set $C$ in the general form $C= \left\{x\in\mathbb{R}^n:~ h(x)=0, g(x)\leq 0 \right\}$, where $h: \mathbb{R}^n \to \mathbb{R}^m $ and $g: \mathbb{R}^n \to \mathbb{R}^p$ are smooth functions. At each iteration $k$, the  Frank-Wolfe algorithm  is used to compute a feasible inexact projection as explained below.  The algorithm codes are freely available at \url{https://orizon.ime.ufg.br/}.

\subsection{Frank-Wolfe algorithm  to find an approximated projection} \label{Sec:CondG}
In this section we use the {\it Frank-Wolfe algorithm}  also known   {\it conditional gradient method}   to find an inexact projection onto a  compact convex set $C\subset \mathbb{R}^n$; papers dealing with this method  include  \cite{BeckTeboulle2004, FrankWolfe1956, JAGGI2013, Konnov2018, LanZhou2016, Ravi2017}.     The exact  projection  of $v\in  \mathbb{R}^n$ onto $C$ is  the solution of the following convex quadratic  optimization problem 
\begin{equation} \label{eq:ProbCond}
	{\min}_{w \in C} \psi(w) := \frac{1}{2}\|w-v\|^2.
\end{equation}
Assume that  $v\notin C$.  Let us  describe the subroutine, which we nominate {\it FW-Procedure},  for finding  an approximated solution of \eqref{eq:ProbCond}  relative to a point  $u \in C$, i.e., a point belonging  to the set ${\cal P}_C(\varphi_{\gamma, \theta, \lambda}, u, v)$,  where the  error tolerance    mapping $\varphi_{\gamma, \theta, \lambda}$ and  the set valued mapping  ${\cal P}_C(\varphi_{\gamma, \theta, \lambda}, u, .)$ are  given  in  Definition~\ref{def:InexactProj}. 
 \begin{algorithm}
\begin{description}
\item[ Step 0.] Set $w_1 = u$ and $k=1$.
\item[ Step 1.] Call the linear optimization oracle (or simply LO oracle) to compute
	\begin{equation}\label{eq:condG}
		z_k := \arg\min_{z \in C} \langle w_k-v, z-w_k \rangle, \qquad g_k^*:= \langle w_k - v, z_k-w_k \rangle.
	\end{equation}
\item[Step 2.] If $g^*_k \geq  - \varphi_{\gamma, \theta, \lambda}(u, v, w_k)$, set $w_{+}:=w_k$ and {\bf stop}; otherwise, compute
	\begin{equation}\label{eq:stepsize}
		\tau_k: = \min\left\{1, \frac{-g^*_k}{\|z_k-w_k\|^2} \right\}, \qquad w_{k+1}:=w_k + \tau_k(z_k-w_k).
	\end{equation}
\item[Step 3.] Set $k \gets k+1$, and go to {\bf Step 1}.
\end{description}
\NoCaptionOfAlgo
\caption{ {\bf FW-Procedure to compute} $w_{+}\in {\cal P}_C(\varphi_{\gamma, \theta, \lambda}, u, v)$}
\end{algorithm}

Since $\psi$ is strictly  convex, we conclude from \eqref{eq:condG} that $\psi(z) > \psi(w_k) + g_k^*$, for all $z \in C$ such that  $z\neq w_k$. Setting  $\psi^*:=\min_{w \in C} \psi(w)$   we have $\psi(w_k) \geq \psi^* \geq \psi(w_k) + g_k^*$, which implies   $g_k^* <0.$  Thus,   the  stepsize $\tau_k$ given  by  \eqref{eq:stepsize} is computed  using exact minimization, i.e., $0<\tau_k := \arg\min_{\tau \in [0,1]} \psi(w_k + \tau(z_k - w_k))$. Since $C$ is convex and $z_k$, $w_k \in C$, we have from  \eqref{eq:stepsize} that $w_{k+1} \in C$, which implies that  all points generated by  {\it FW-Procedure}   are in $C$.    Moreover,  \eqref{eq:condG} implies that  $g_k^* = \langle w_k - v, z_k - w_k \rangle  \leq  \langle w_k - v, z - w_k \rangle$, for all $z \in C$.  Hence,  if  the  stopping criteria    $g_k^* = \langle w_k - v, z_k - w_k \rangle \geq - \varphi_{\gamma, \theta, \lambda}(u, v, w_k)$  in Step 2 of   {\it FW-Procedure}   is satisfied, then  $ \langle  v-w_k , z - w_k \rangle \leq \varphi_{\gamma, \theta, \lambda}(u, v, w_k)$, for all $z \in C$. Therefore,   from Definition~\ref{def:InexactProj},    we conclude that    $w_{+}=w_k\in {\cal P}_C(\varphi_{\gamma, \theta, \lambda}, u, v)$, i.e., the output of {\it FW-Procedure},   is a feasible inexact projection of $v \in \mathbb{R}^n$ relative to $u \in C$.  Finally,   \cite[Proposition A.2]{BeckTeboulle2004} implies that $\lim_{k\to +\infty} g_k^* =0$. Thus, the stopping criteria  $g_k^* \geq - \varphi_{\gamma, \theta, \lambda}(u, v, w_k)$  in Step 2 of   {\it FW-Procedure}   is satisfied in a finite number of iterations if and only if $\varphi_{\gamma, \theta, \lambda}(u, v, w_k)\neq 0$, for all $k=0, 1, \ldots$.  

The following  theorem is an import result about the convergence rate  of the conditional gradient method applied to problem \eqref{eq:ProbCond}, which its proof can be found in \cite{GarberHazan2015}. For stating the theorem,  we first note that 
\begin{equation}\label{eq:PropPsi}
	\psi(w) - \psi(w^*) \leq \frac{1}{2} \|w-w^*\|, \qquad \forall z \in C;
\end{equation}
see also \cite[Theorem 2.1.8]{Nesterov2004}.

\begin{theorem}  \label{th:fcr}
	Let  $d_C := \max_{z,w \in C} \|z-w\|$ be the diameter of C. For $k \geq 1$,  the iterate $w_k$ of \emph{FW-Procedure } satisfies $\psi(w_k) -\psi(w_*) \leq 8d_{C}^2/k$. Consequently, using \eqref{eq:PropPsi},  we have  $\|w_k - w_*\| \leq 4d_{C}/\sqrt{k}$, for all $k \geq 1$.
\end{theorem}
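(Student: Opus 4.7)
The plan is to follow the classical analysis of the Frank--Wolfe (conditional gradient) method for a smooth strongly convex objective, specialized here to the quadratic $\psi(w)=\tfrac{1}{2}\|w-v\|^2$. The two structural ingredients I would use are: $\nabla\psi(w)=w-v$ is $1$--Lipschitz, so $\psi$ is $1$--smooth; and $\psi$ is $1$--strongly convex, so by optimality of $w_\ast$ over $C$ one has $\tfrac12\|w-w_\ast\|^2\le\psi(w)-\psi(w_\ast)$ for every $w\in C$ (this is the intended reading of \eqref{eq:PropPsi}).

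Setting $h_k:=\psi(w_k)-\psi(w_\ast)$, the exact Taylor expansion of the quadratic along the update direction $w_{k+1}=w_k+\tau_k(z_k-w_k)$ gives
\[
\psi(w_{k+1})=\psi(w_k)+\tau_k\, g_k^{*}+\tfrac{\tau_k^{2}}{2}\|z_k-w_k\|^2.
\]
Convexity combined with the LO-oracle optimality of $z_k$ in~\eqref{eq:condG} yields $g_k^{*}\le \psi(w_\ast)-\psi(w_k)=-h_k$. Plugging the line-search choice $\tau_k=\min\{1,-g_k^{*}/\|z_k-w_k\|^2\}$ into the displayed equality and bounding $\|z_k-w_k\|\le d_C$, I would distinguish two regimes: if the unconstrained optimum $-g_k^{*}/\|z_k-w_k\|^{2}$ lies in $[0,1]$, direct substitution gives $h_{k+1}\le h_k-h_k^{2}/(2d_C^{2})$; if it exceeds $1$, then $\tau_k=1$ and $-g_k^{*}\ge\|z_k-w_k\|^{2}$, so $h_{k+1}\le h_k+g_k^{*}/2\le h_k/2$. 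In either case one obtains the master recursion
\[
h_{k+1}\le h_k-\min\!\left\{\frac{h_k^{2}}{2d_C^{2}},\;\frac{h_k}{2}\right\}.
\]

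To close the argument, I would use two observations. First, substituting $\tau=1$ into the above Taylor expansion (majorized by the line-search value) yields a one-step improvement $h_2\le h_1+g_1^{*}+\tfrac12 d_C^{2}\le\tfrac12 d_C^{2}$, so from $k=2$ onward we always sit in the quadratic-decrease regime $h_{k+1}\le h_k\bigl(1-h_k/(2d_C^{2})\bigr)$. Second, since the map $a\mapsto a-a^{2}/(2d_C^{2})$ is monotone increasing on $[0,d_C^{2}]$, a routine induction on the scaled gap $a_k:=h_k/d_C^{2}$ propagates $a_k\le 8/k$: for $k\ge 8$ one checks $a_{k+1}\le 8/k-32/k^{2}=8(k-4)/k^{2}\le 8/(k+1)$, while for $2\le k\le 7$ the inequality holds trivially since $a_k\le 1/2 < 8/k$. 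Finally, the distance bound follows from strong convexity: $\|w_k-w_\ast\|\le\sqrt{2h_k}\le 4d_C/\sqrt{k}$.

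The main obstacle, and the only nontrivial piece of bookkeeping, is the passage from the exponential-decrease phase ($\tau_k=1$, $h_{k+1}\le h_k/2$) to the sublinear-decrease phase ($\tau_k<1$, $h_{k+1}\le h_k-h_k^{2}/(2d_C^{2})$); everything else is a direct exploitation of smoothness, convexity, and the linear-oracle characterization of $z_k$. The one-step bound $h_2\le d_C^{2}/2$ identified above is precisely what makes this transition clean and lets the $O(1/k)$ induction run from $k=2$, with the loose constant $8$ absorbing the first iteration.
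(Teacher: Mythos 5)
The paper does not actually prove Theorem~\ref{th:fcr}; it only points to \cite{GarberHazan2015}. Your argument is therefore not so much ``a different route from the paper's proof'' as a self-contained proof where the paper offers none, and it is the classical conditional-gradient analysis: the exact quadratic expansion along the FW direction, the linear-oracle bound $g_k^*\le -h_k$ from convexity, the two-regime recursion $h_{k+1}\le h_k-\min\{h_k^2/(2d_C^2),\,h_k/2\}$, the one-step bound $h_2\le \tfrac12 d_C^2$ obtained by majorizing the exact line search with the feasible choice $\tau=1$, and the $8/k$ induction. I checked each step: the substitution of $\tau_k$ in both regimes, the monotonicity of $a\mapsto a-a^2/2$ on $[0,1]$, and the arithmetic $8/k-32/k^2=8(k-4)/k^2\le 8/(k+1)$ (which reduces to $-3k-4\le 0$) are all correct. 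You also correctly read \eqref{eq:PropPsi} as the strong-convexity inequality $\tfrac12\|w-w_*\|^2\le\psi(w)-\psi(w_*)$; as printed in the paper that display is garbled (wrong inequality direction and a missing square), and your reading is exactly what is needed to pass from the value gap to $\|w_k-w_*\|\le\sqrt{2h_k}\le 4d_C/\sqrt{k}$.

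The one point of daylight between what you prove and what the theorem asserts is the case $k=1$. Your induction starts at $k=2$ via $h_2\le\tfrac12 d_C^2$, whereas the theorem claims $h_k\le 8d_C^2/k$ for all $k\ge1$, i.e. $h_1=\psi(u)-\psi(w_*)\le 8d_C^2$ for the initial point $w_1=u$. That cannot be derived from $d_C$ alone: take $C=[0,1]\subset\mathbb{R}$, $u=w_1=1$, $v=-M$; then $w_*=0$, $d_C=1$, and $h_1=M+\tfrac12$, which exceeds $8d_C^2$ once $M>8$. So the statement should either be read for $k\ge 2$ (equivalently, with $w_k$ denoting the iterate after $k$ oracle calls rather than $k-1$), or it needs a hypothesis controlling the initial gap, e.g. via $\operatorname{dist}(v,C)$. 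This is a defect of the theorem as quoted rather than of your argument, but it should be flagged explicitly instead of silently restricting the induction to $k\ge2$.
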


\subsection{Examples}

  Consider the problem
\begin{equation}\label{numprob}
\min_{x \in C} \, f(x) :=\|x\|_1,
\end{equation}
where $C:=\left\{x\in\mathbb{R}^n \colon x\geq0  \mbox{ and }  (x-\bar{x})^TQ(x-\bar{x})\leq 1 \right\}$ for a given vector $\bar{x}\in\R^n$ and a symmetric positive definite matrix $Q\in\mathbb{R}^{n\times n}$.
Since the $\ell_1$ norm tends to promote sparse solutions, we formulated instances of Problem \eqref{numprob} where there are vectors in $C$ with only one non-null component. Thus we can verify the ability of SInexPD Algorithm to recover sparsity.
Let us describe the main characteristics of the considered instances. Consider the spectral decomposition of $Q$ given by
$$
Q=\sum_{i=1}^n\lambda_iv^i(v^i)^T, 
$$
where $\lambda_1 \geq \ldots   \geq \lambda_{n-1}  > \lambda_n>0$ are the eigenvalues of $Q$ and $\{v_1, v_2,\dots,v_n\}$ is an orthonormal system of corresponding eigenvectors. We assume that there exists $u\in  \mathbb{R}^n_{++}$ such that
\begin{equation}\label{eq:id2}
v_n=u/\|u\|, \quad  \lambda_ n<1/\|u\|^2, \qquad \mbox{and} \qquad \bar{x}=u+ \xi e_n,  
\end{equation}
where $\xi\geq 1/\sqrt{\lambda_n}$ and $e_n\in\R^n$ is such that $e_n=[0,\ldots,0,1]^T$. We claim that $\tilde{x}:= \xi e_n\in C$ and $0\notin C$. Indeed,   using \eqref{eq:id2} we have $\tilde{x}-\bar{x}=-\|u\|v_n$ , which implies 
$$
 (\tilde{x}-\bar{x})^TQ (\tilde{x}-\bar{x})= \|u\|^2 v_n^TQv_n= \|u\|^2\lambda_ n<1,
$$
concluding that $\tilde{x}\in C$. Now note that $0\in C$ if and only if $\bar{x}^TQ\bar{x} \leq 1$. Since 
$$
 \xi\geq \frac{1}{\sqrt{\lambda_n}} > -\langle u, e_n\rangle + \frac{1}{\sqrt{\lambda_n}} >\left(-\langle u, e_n\rangle+\sqrt{\langle u, e_n\rangle^2-(\|u\|^2-1/\lambda_n)} \right)> 0
$$
and $\|u+ \xi e_n\|^2=\xi^2+2\langle u, e_n\rangle \xi +\|u\|^2$, we have 
$$\bar{x}^TQ\bar{x}\geq \lambda_n\|\bar{x}\|^2=\lambda_n\|u+ \xi e_n\|^2>1,$$
implying that $0\notin C$.

 For Problem \eqref{numprob}, given $x\in\R^n$ we can get $s\in\partial f(x)$ by taking
$$
[s]_i := \left\{
\begin{array}{rl}
 -1, & \mbox{ if } [x]_i < 0 \\
  0, & \mbox{ if } [x]_i = 0 \\
  1, & \mbox{ if } [x]_i > 0, \\
\end{array}
\right.
$$
where $[\cdot]_i$ stands for the $i$-th component of the corresponding vector.  For computing the optimal solution $z_k$ at Step~1 of the FW-Procedure, we use the software Algencan~\cite{algencan}, an augmented Lagrangian code for general nonlinear optimization programming. We set $R=\|x_1-x_0\|$ and $\delta_0=\|s_0\|/2$ as suggested in \cite{nedic_bertsekas2001} and \cite{GoffinKiwiel1999}, respectively. 
Our implementation uses the stopping criterion
$$\delta_{\ell}\leq 10^{-3}(1+|f_k^{rec}|),$$
also suggested in \cite{GoffinKiwiel1999}. 
Thus, in Algorithm~\ref{Alg:INP}, we have $\epsilon_k=0$ for all $k$. In our tests, we set $x_0=\bar{x}$ and, for all $k$, $\theta_k=0.25$, $\lambda_k=0.025$, $\gamma_k=0.025$, and defined $\beta_k := 2 (1-2\lambda_k)/(1+2\gamma_k)-10^{-6}$ satisfying \eqref{betaDinamic}. Figure~\ref{fig:Behavior} shows the behavior of SInexPD Algorithm on a  two-dimensional instance of Problem \eqref{numprob}. The hatched region represents set $C$ and only the iterates for which the target level was updated are plotted. As can be seen, the algorithm successfully found the {\it solution} for $\ell = 6$ iterations. 
We point out that the algorithm performed a total of $\ell = 14$  ($k=189$)  iterations until it met the stopping criterion.
The highlight of the figure is that, before finding the solution, the iterates belong to the interior of set $C$. This is mostly due to the fact that SInexPD Algorithm performs inexact projections.

\begin{figure}[H]
\centering
\includegraphics[scale=0.70]{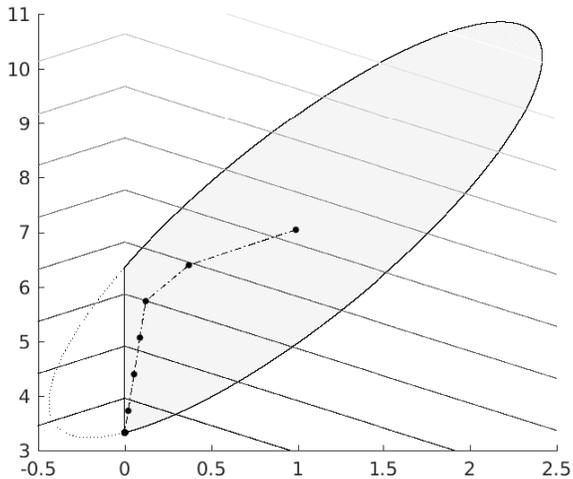}\\
\caption{Behavior of SInexPD Algorithm on a  two-dimensional instance of Problem \eqref{numprob}.}
\label{fig:Behavior}
\end{figure}

Finally, we considered six instances of Problem \eqref{numprob} varying the dimension $n$. Without attempting to go into details, we mention that the problems were randomly generated such that $\lambda_n\in(10^{-2},10^{-6})$, $\lambda_i\in(10,10^{3})$ for $i = 1,\ldots,n-1$, vector $u\in\R^n_{++}$ in \eqref{eq:id2} is such that $\|u\| \in(0.8/\sqrt{\lambda_n},1/\sqrt{\lambda_n})$, and $\xi = 1\sqrt{\lambda_n}$. These imply that, with respect to the ellipsoid that makes up set $C$, the axis corresponding to the eigenvector $v_n$ is {\it much larger} than the others ones. Moreover, the vectors of $C$ that have only one non-null component are {\it far} from the center $\bar{x}$. These characteristics make problems more challenging for the algorithm. Table~\ref{tab:Performance} shows the performance of SInexPD Algorithm. In the table,  column ``$n$" informs the considered dimension, ``$k$" and ``$\ell$" are the number of iterations according to SInexPD Algorithm, ``$\|x_k^{rec}\|_0$" is the number of non-null elements at the final iterate, and  ``$f_k^{rec}$" and ``$\delta_{\ell}$" are their corresponding values at the final iterate.

\begin{table}[h!]
\centering
\begin{tabular}{|c|cc|ccc|}  \hline \rowcolor[gray]{.9}
$n$     &        $k$ &  $\ell$   & $\|x_k^{rec}\|_0$ & $f_k^{rec}$ & $\delta_{\ell}$ \\    \hline  
10  & 91 & 19 & 1 & 1.12D+01 & 6.18D-03 \\
100 & 85 & 21 & 1 & 1.07D+01 & 9.77D-03 \\
200 & 63 & 36 & 1 & 2.11D+01 & 1.38D-02 \\
500 & 58 & 22 & 1 & 1.01D+01 & 1.09D-02 \\
800 & 575 & 26 & 1 & 1.20D+01 & 6.91D-03  \\
1000 & 669 & 24 &  1 & 1.15D+01 & 7.72D-03 \\ \hline 
\end{tabular}
\caption{Performance of SInexPD Algorithm on six instances of Problem \eqref{numprob} varying the dimension.}
\label{tab:Performance}
\end{table}


As showed in Table~\ref{tab:Performance}, the algorithm found vectors with only one non-null component in all instances, showing its ability to recover sparsity in this class of problems.

 Remembering that the table data corresponds to the values when the stop criterion was met, we reported that the {\it final} iterates were found with $\ell = 10, 16, 27, 12, 15$ and $12$ iterations, respectively. We point out that, due to the inexact projections and mimicking the behavior of SInexPD Algorithm in the two-dimensional case, in each instance the iterates remained in the interior of $C$ before the corresponding solution was found.

\section{Conclusions} \label{Sec:Conclusions}
It is well known that the application of the subgradient method is only suitable for certain specific classes of non-differentiable convex optimization problems. However, this method  is basic in the sense that it is the first step towards designing more efficient methods for solving that  problems.   Indeed, it is  intrinsically  related to cutting-plane and bundle methods; see \cite{UrrutyLemarechal1993_II}.  These considerations lead us to conclude that the knowledge of new properties of the subgradient method has great theoretical value. In particular, our inexact version of the projected subgradient method will be useful  in this  theoretical context.  Finally, one issue we believe deserves attention is the  construction of inexact projected  versions of cutting-plane and bundle methods.

\bibliographystyle{habbrv}
\bibliography{SubgradInexP}

\end{document}